\newcommand{\C}{\mathbb{C}}
\newcommand{\Q}{\mathbb{Q}}
\newcommand{\Z}{\mathbb{Z}}
\newcommand{\PP}{\mathbb{P}}
\newcommand{\tW}{\widetilde{W}}
\newcommand{\tS}{\widetilde{S}}
\newcommand{\tV}{\widetilde{V}}
\newcommand{\of}{\mathcal{O}}
\newcommand{\W}{\bigwedge}
\DeclareMathOperator{\Ann}{Ann}
\DeclareMathOperator{\Pf}{Pf}
\DeclareMathOperator{\Ker}{Ker}
\DeclareMathOperator{\ddim}{dim}
\DeclareMathOperator{\Gr}{Gr}
\DeclareMathOperator{\HP}{HP}
\newtheorem{thm}{Theorem}[section]
\newtheorem{corollary}[thm]{Corollary}
\newtheorem{lemma}[thm]{Lemma}
\newtheorem{proposition}[thm]{Proposition}
\theoremstyle{definition}
\newtheorem{thm*}{Theorem}
\newtheorem{proposition*}{Proposition}
\def\l@subsection{\@tocline{1}{0,2pt}{2pc}{8mm}{\ \ }} 
\def\l@section{\@tocline{1}{0,2pt}{2pc}{8mm}{\ \ }}
\author{Enrico Fatighenti }
\address{Dipartimento di Matematica e Fisica\\
  Università Roma 3\\
  L.go S. L. Murialdo, 1, 00146 Roma, Italia}
\email[E.~Fatighenti]{efatighenti@mat.uniroma3.it}
 \title{Surfaces of general type with $p_g=1$, $q=0$, $K^2=6$ and grassmannians}
\begin{document}
\maketitle

\begin{abstract}
We construct examples of surfaces of general type with $p_g=1$, $q=0$ and $K^2=6$. We use as key varieties Fano fourfolds and Calabi-Yau threefolds that are zero section of some special homogeneous vector bundle on Grassmannians. We link as well our construction to a classical Campedelli surface, using the Pfaffian-Grassmannian correspondence.
\end{abstract}

\section{Introduction}
The classification of surfaces of general type is one of the most active areas of algebraic geometry. Many examples are known, but a detailed classification is still lacking (and maybe even impossible to accomplish), and several hard problems are still open. 
 To each minimal surface of general type we will associate a triple of numerical invariants, $(p_g, q, K^2_S)$, where $p_g:=h^0(S, K_S)$ and $q:=h^1(S, \of_S)$. These indeed determine all other classical numerical invariants, such as $e_{\textrm{top}}(S)=12\chi(\of_S)-K^2_S$ and $P_{m}(S):=h^0(S, mK_S)=\chi(\of_S) + {m \choose 2} K^2_S$. For a recent survey on the surfaces of general type we refer to \cite{bauer2006complex}.\\
Two very simple ways to produce surfaces of general type are complete intersections of sufficiently high multi-degree or product of curves with $g \ge 2$. These produces surfaces with either large $p_g$ or $q$. This is a particular manifestation of  more general phenomenon: producing examples of surfaces of general type with low $p_g$ and $q$ is indeed quite difficult, and a complete classification is beyond the current level of research. A useful tool to produce such examples consist in the identification of families of surfaces of general type whose general member $S$ is invariant with respect to a finite group $H$, and taking the quotient $S/H$. The archetypal example is due to Godeaux, and is realised as the quotient $Y_5/\Z/5$, where $Y_5 \subset \PP^3$ is a quintic surface in $\PP^3$ on which the group $\Z/5$ acts freely. Surfaces with $p_g=q=0, K^2_S=1$ are therefore called \emph{(numerical) Godeaux surfaces}. Similarly one can construct explicit examples of a surface with $p_g=q=0, K^2_S=2$ as quotient for a $\Z/7$ action. Indeed surfaces with these prescribed invariants are called \emph{(numerical) Campedelli surfaces}. We will recall later in full details this construction. 
Much less is known on surfaces with $p_g=1, \ q=0$. One important class of examples is given by \emph{Todorov surfaces}, i.e. minimal smooth surfaces $S$ of general type with $q=0$ and $p_g=1$ having an involution $i$ such that $S/i$ is birational to a K3 surface and such that the bicanonical map of $S$ is composed with $i$. They constitute a counterexample to the Torelli problem, \cite{todorov}. Some more (simply connected) examples are obtained by Park, Park and Shin in \cite{park} using the technique of $\Q$-Gorenstein smoothing. This paper is devoted to find some new examples of surfaces of general type with $p_g=1, \ q=0, \ K^2=6$.  Our examples are neither of Todorov nor Park-Park-Shin type. Indeed for Todorov surfaces we have a complete list of 11 non empty irreducible families,
\cite[pg 335]{morrison}, with our surface being in none of them. Similarly the surfaces in the Park-Park-Shin list are simply connected. To the best of our knowledge the surfaces we construct are new, as the construction is. Our hope is to use similar methods in future to construct further examples of surfaces.
\\ We will explain some interesting connections (via the Pfaffian-Calabi Yau correspondence) to a classical construction of a Campedelli surface as well.
Finding examples of invariant subfamilies in $\PP^n$  with the right numerology can be difficult. On the other hand the lists  of Fano fourfolds and Calabi-Yau threefolds of K\"uchle and Inoue-Ito-Miura, cf. \cite{kuchle}, \cite{inoue2016complete} provides an excellent source of potential key varieties.\\
The general strategy goes as follows: first we look for an explicit subfamily invariant with respect to a finite group of the automorphism group. Provided that the fixed locus of the action is not too big, we then take the quotient. This strategy can be potentially applied to many of the examples in the above mentioned lists. Here we choose a particularly nice case. The starting point is the analysis of two Fano fourfolds of index 1 in Grassmannians Gr(2,6) and Gr(2,7). These Fanos are constructed as  (resp.) zero locus of a general section of the twisted quotient bundle and 6-codimensional general linear section. They appears in K\"uchle list as (b3) and (b7) and were shown to be projectively equivalent in a recent work of Manivel (\cite{manivel}). From these one can get to the level of surfaces by simply picking two furher hyperplane sections. These are surfaces of general type with $p_g=13, K^2=42$.\\ We explicitely show how to construct an action of the dihedral group $D_7$ of order 14 on these Fanos, and how to pick $D_7$-invariant linear subspaces such that the corresponding surfaces are smooth, with a free $\Z/7 \triangleleft D_7$ action. This in turn will allow us to produce new examples of surfaces of general type and Calabi-Yau threefolds. We can summarise our main results as 
\begin{thm}[Thm. \ref{cyquotient}, Cor. \ref{surface}, Prop. \ref{g36}] We construct examples of surfaces of general type with $p_g=1,\ q=0,\ K^2=6$ as
\begin{itemize}
\item $\Z/7$ quotient of a 8-codimensional invariant linear section of the Grassmannian $\Gr(2,7)$, or equivalently, of the zero set of an invariant section of the vector bundle $\mathcal{E}= \mathcal{Q}(1) \oplus \of(1)^{\oplus 2}$ on $\Gr(2,6)$
\item $\Z/7$ quotient of a 7-codimensional invariant linear section of the Grassmannian $\Gr(3,6)$.
\end{itemize}
\end{thm}
One of the question we want to answer in a future work is if the link between the two surfaces in above theorem (in particular whether they belong to the same family).\\
In the end we explain how to link our construction to another famous surface of general type the $\Z/7$ Campedelli-Reid surface $\tV$ constructed in \cite{miles}. Indeed we extend our surface $\tS$ to a Calabi-Yau threefold $\tW$. Using the famous Pfaffian-Calabi Yau correspondence, see \cite{borisov2009pfaffian} we link $\tW$ to its (homological) projective dual $\tW^{\vee}$, whose hyperplane section is the above Campedelli surface $\tV$. This is the content of Proposition \ref{camp}.


\section{From Fano fourfolds to surfaces of general type}
Let $V_6 \cong \C^6$ and $V_7=\C^7$, where we fix (standard) bases for $V_6$ (resp. $V_7$) that we denote $v_1, \ldots, v_6$ and $v_1, \ldots, v_7$. We consider the Grassmannians of 2 planes in both $V_6$ and $V_7$, denoting them as Gr(2,6) and Gr(2,7) in their Pl\"ucker embedding.
Recall in general that the Grassmannians Gr(k,n) is a smooth subvariety of $\PP(\W^k \C^n) \cong \PP^{{n \choose k}-1}$ of dimension $k(n-k)$. On Gr(k,n) we have the standard (tautological) exact sequence
\begin{equation}
0 \to \mathcal{S} \to \of_G \otimes V \to \mathcal{Q} \to 0
\end{equation}
where $\mathcal{S},\mathcal{Q}$ denotes the tautological (resp. quotient) bundle of rank k (resp. n-k). One has $H^0(G, S^*)\cong V^*$ and $H^0(G, \mathcal{Q})\cong V$ and $\of_G(1) \cong det(\mathcal{S}^*)=det(\mathcal{Q})$, that gives the Pl\"ucker embedding. If we consider the zero locus inside the Grassmannian of a general global section of an homogeneous vector bundle, we get several interesting constructions. A complete classification of varieties of this type is still far away: partial results can be found for example in \cite{kuznetsov}, \cite{kuchle}, \cite{inoue2016complete} where we find a classification of Fano fourfolds of index 1 and Calabi-Yau 3-folds. In the following we focus on the geometry of some special varieties in the Grassmannians Gr(2,6) and Gr(2,7). \\
Let indeed $G_7=\Gr(2,7)$ and consider the following tower of linear sections \[ S_Z \subset W_Z \subset Z \subset G_7 \]
where each member of the tower is given by the zero scheme of a general global section of $\of_{G_7}(1)^{\oplus r}$, $r=6,7,8$. Equivalently, each of these is given by a general linear system $\Sigma \subset \W^2 V^*$ of the corresponding dimension, where we use $H^0(G_7, \of_{G_7}(1)) \cong \W^2 V_7^*$.
Since $\omega_{G_7} \cong \of_{G_7}(-7)$ by adjunction it is easy to see that $Z$ is a prime Fano fourfold of index $\iota_Z=1$, $W_Z$ is a Calabi-Yau threefold (already famous in literature for its application in Mirror Symmetry, see \cite{rodland2000pfaffian}) and $S_Z$ is a surface of general type with $\omega_{S_Z}=\of_{S_Z}(1)$. All of these three varieties shares $H^{\ddim}=42$. We can compute easily their Hodge numbers for example using Koszul complex and Bott's theorem.
\begin{lemma} \begin{itemize}
\item The only non-zero Hodge numbers for $Z$ are $h^{0,0}=h^{1,1}=h^{3,3}=h^{4,4}=1$ and $h^{1,3}=h^{3,1}=6, \ h^{2,2}=57$. Moreover $h^1(Z, T_Z)=42$;
\item The only non-zero Hodge numbers for $W_Z$ are $h^{0,0}=h^{1,1}=h^{2,2}=h^{3,3}=h^{3,0}=h^{0,3}=1$ and $h^{1,2}=h^{2,1}=50$. Moreover $h^1(Z, T_Z)=50$;
\item The only non-zero Hodge numbers for $S_Z$ are $h^{0,0}=h^{2,2}=1$ and $h^{2,0}=h^{0,2}=13, \ h^{1,1}=98$. Moreover $h^1(Z, T_Z)=56$;
\end{itemize}
\end{lemma}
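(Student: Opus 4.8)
The plan is to reduce everything to cohomology computations on the ambient homogeneous space $G_7=\Gr(2,7)$, where all the relevant bundles are homogeneous and the Borel--Weil--Bott theorem applies. First I would record the two structural facts that organize the computation. On the one hand, each of $Z$, $W_Z$, $S_Z$ is a smooth complete intersection of ample divisors, so the Lefschetz hyperplane theorem identifies its cohomology with that of $G_7$ outside the middle degree; since $G_7$ is rational homogeneous its cohomology is of Hodge--Tate type (concentrated in bidegrees $(p,p)$, with ranks read off from the Schubert cells of a $2\times 5$ box) and of Picard rank one. This pins down every Hodge number away from the middle dimension: for $Z$ one gets $h^{0,0}=h^{1,1}=h^{3,3}=h^{4,4}=1$ and $h^{2,0}=0$, for $W_Z$ one gets $h^{1,1}=h^{2,2}=1$ with $h^{1,0}=h^{2,0}=0$, and for $S_Z$ one gets $q=h^{1,0}=0$. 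On the other hand, the canonical bundles computed by adjunction, $\omega_Z=\of_Z(-1)$, $\omega_{W_Z}=\of_{W_Z}$, $\omega_{S_Z}=\of_{S_Z}(1)$, control the top holomorphic forms through sections of $\of_X(\pm 1)$.

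For the surface this already suffices. The geometric genus is $p_g=h^0(S_Z,\of_{S_Z}(1))$, and restricting $H^0(G_7,\of_{G_7}(1))\cong\W^2 V_7^*$ (of dimension $21$), with the Koszul complex showing the restriction is surjective with kernel the $8$-dimensional linear system cutting out $S_Z$, gives $p_g=21-8=13$. With $q=0$ one has $\chi(\of_{S_Z})=14$, and since $K^2_{S_Z}=42$ is the Pl\"ucker degree, Noether's formula yields $e(S_Z)=12\chi-K^2=126$; as $b_1=0$ this forces $b_2=124$, hence $h^{1,1}=124-2p_g=98$.

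For $W_Z$ and $Z$ only the middle Hodge numbers remain, and I would compute them with the Koszul--Bott machine. The conormal sequence $0\to N^*_{X/G_7}\to \Omega^1_{G_7}|_X\to\Omega^1_X\to 0$, with $N^*_{X/G_7}=\of_X(-1)^{\oplus r}$, together with its exterior powers reduces the cohomology of $\Omega^p_X$ to that of the restricted bundles $\Omega^{j}_{G_7}|_X(-i)$; the Koszul resolution $0\to\of_{G_7}(-r)\to\cdots\to\of_{G_7}(-1)^{\oplus r}\to\of_{G_7}\to\of_X\to 0$ then expresses each $H^\bullet(X,\Omega^{j}_{G_7}|_X(m))$, via the hypercohomology spectral sequence, in terms of $H^\bullet(G_7,\Omega^j_{G_7}(m-k))$. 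Decomposing $\Omega^j_{G_7}=\W^j(\mS\otimes\mQ^*)$ into Schur summands $\Sigma_\lambda\mS\otimes\Sigma_{\lambda'}\mQ^*$ and running Bott's algorithm (add $\rho$, discard a weight with a repeated entry, otherwise sort to locate the unique nonzero cohomology) computes every term. In practice it is cleanest to compute the Euler characteristics $\chi(\Omega^p_X)$ and combine them with the Lefschetz-determined entries: for $W_Z$ the only unknown is $h^{2,1}=h^{1,2}$, fixed by $e(W_Z)=2-2h^{2,1}$ to give $h^{2,1}=50$; for $Z$ the unknowns $h^{3,1}=h^{1,3}$ and $h^{2,2}$ are separated by $e(Z)$ (which fixes $b_4=h^{1,3}+h^{2,2}+h^{3,1}$) together with $\chi(\Omega^2_Z)=h^{2,2}$, yielding $h^{2,2}=57$ and $h^{3,1}=6$. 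The deformation numbers $h^1(X,T_X)$ come out identically from $0\to T_X\to T_{G_7}|_X\to\of_X(1)^{\oplus r}\to 0$; as a check they must match the naive parameter count $\dim\Gr(r,21)-\dim\PGL_7$, namely $42$, $50$, $56$.

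The main obstacle is not conceptual but the sheer bookkeeping: $\Omega^p_{G_7}$ for $p$ up to $\dim X$ breaks into many Schur summands, each to be twisted by several powers of $\of(-1)$ and fed through Bott, and one must verify that the two superimposed spectral sequences (Koszul hypercohomology and the conormal filtration) degenerate, so that the computed Euler characteristic equals the individual Hodge number rather than an alternating sum hiding cancellation. The Lefschetz reduction is exactly what makes this tractable, since it confines the genuinely unknown terms to a single middle degree whose Euler characteristic already determines them.
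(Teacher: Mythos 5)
Your proposal is correct, and it follows exactly the route the paper indicates: the paper gives no written proof, only the remark that the Hodge numbers can be computed ``using Koszul complex and Bott's theorem,'' which is precisely your Lefschetz-plus-Koszul--Bott scheme, supplemented by the Euler-characteristic and Noether-formula bookkeeping that isolates the middle-degree numbers. All of your numerical checks ($p_g=21-8=13$, $e(S_Z)=126$, $h^{1,1}=98$, and the parameter counts $\dim\Gr(r,21)-\dim\PGL_7=42,50,56$ for $h^1(T_X)$) are consistent with the stated values.
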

%

\paragraph{Quotient bundle on Gr(2,6)}
Consider now the Grassmannian $G_6$=Gr(2,6) and $\mathcal{Q}(1)$ the rank four globally generated quotient bundle twisted by $\of_{G_6}(1)$. If $\lambda$ is a general global section in $H^0(G_6, \mathcal{Q}(1))$ its zero locus $Y_{\lambda}$ will be a smooth Fano fourfold, with $$K_{Y_{\lambda}} = (K_{G_6} \otimes det (\mathcal{Q}(1))|_{Y_{\lambda}}=\of_{Y_{\lambda}}(-6+5)=\of_{Y_{\lambda}}(-1).$$
We have a concrete description of the space of the global section of $\mathcal{Q}(1)$ given in \cite{manivel}. More precisely by Borel-Bott-Weil theorem we have $$ H^0(G_6, \mathcal{Q}(1))=\Ker (\lrcorner\colon (\W^2 V_6^*) \otimes V_6 \to V_6^*),$$ where $\lrcorner$ is the contraction operator. \\ In particular we have that any $\lambda \in  H^0(G, \mathcal{Q}(1))$ is an element in Hom($\W^2 V_6, V_6)$. For every $\lambda$ the corresponding $Y_{\lambda}$ will be  
\begin{equation} \label{lambda}Y_{\lambda} =\lbrace <a,b> \in \Gr(2,6) \ | \ \lambda(a,b) \in \ <a,b> \rbrace. 
\end{equation}
By taking two further hyperplane sections one gets even here a tower $$ S_Y \subset W_Y \subset Y_{\lambda} \subset G_6.$$
As one can check the invariants of the towers are the same once fixed the dimension: the reason for this coincidence has been explained by Manivel.
\begin{thm}[\cite{manivel}] $Z$ and $Y_{\lambda}$ are projectively equivalent.
\end{thm}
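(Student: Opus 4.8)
The plan is to realise $Y_\lambda$ explicitly as a $6$-codimensional linear section of $\Gr(2,7)$, so that it lands in the same projective family as $Z$. Write $V_7 = V_6 \oplus \C v_7$ and set $e = v_7$, so that $\W^2 V_7 = \W^2 V_6 \oplus (V_6 \wedge e)$ with $V_6 \wedge e \cong V_6$. Regarding $\lambda \in H^0(G_6,\mathcal{Q}(1)) = \Ker(\contr\colon \W^2 V_6^* \otimes V_6 \to V_6^*)$ as a linear map $\lambda\colon \W^2 V_6 \to V_6$, I would introduce
\[
T \colon \W^2 V_6 \longrightarrow \W^2 V_7, \qquad T(\eta) = \eta + \lambda(\eta)\wedge e .
\]
Since $T$ is the identity on the $\W^2 V_6$-summand it is injective, so it induces a projective-linear embedding $\PP^{14} = \PP(\W^2 V_6) \hookrightarrow \PP(\W^2 V_7) = \PP^{20}$ whose image $\PP(\im T)$ is a linear subspace of codimension $6$. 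The heart of the argument is to identify $\Gr(2,7)\cap\PP(\im T)$ with the image of $Y_\lambda$.

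The decisive computation is the decomposability test: a $2$-vector is decomposable if and only if it squares to zero, and one expands
\[
T(\eta)\wedge T(\eta) = \eta\wedge\eta + 2\,(\eta\wedge\lambda(\eta))\wedge e ,
\]
the term $\lambda(\eta)\wedge e\wedge\lambda(\eta)\wedge e$ vanishing automatically. Using the direct sum $\W^4 V_7 = \W^4 V_6 \oplus (\W^3 V_6 \wedge e)$, the two summands on the right lie in complementary pieces, so $T(\eta)$ is decomposable precisely when $\eta\wedge\eta = 0$ and $\eta\wedge\lambda(\eta)=0$ simultaneously. The first condition forces $\eta = a\wedge b$ to be decomposable, i.e. $\eta$ lies in the cone over $\Gr(2,6)$; given that, the second reads $a\wedge b\wedge\lambda(a\wedge b)=0$, that is $\lambda(a,b)\in\langle a,b\rangle$. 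By the description \eqref{lambda} this is exactly the defining condition of $Y_\lambda$. Hence $T$ restricts to a projective-linear isomorphism carrying $Y_\lambda$ onto the $6$-codimensional linear section $\Gr(2,7)\cap\PP(\im T)$.

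It then remains to match this section with $Z$, that is, to check that for general $\lambda$ it is a \emph{general} section (hence projectively equivalent to $Z$), and conversely that a general section of $\Gr(2,7)$ arises as some $\PP(\im T)$ with $\lambda$ in the kernel of the contraction. This is where I expect the only genuine subtlety to sit. The assignment $\lambda \mapsto \im T$ identifies $\Hom(\W^2 V_6, V_6)$ with the chart of $\Gr(15,\W^2 V_7)$ of subspaces complementary to $V_6\wedge e$, so a general $\lambda$ indeed yields a general subspace transverse to $\Gr(2,7)$; the delicate point is the role of the contraction condition. A $\lambda$ in the complementary summand $V_6^* \subset \W^2 V_6^*\otimes V_6$ has the form $a\wedge b\mapsto \xi(a)b-\xi(b)a$, for which $T(a\wedge b)=(a-\xi(a)e)\wedge(b-\xi(b)e)$ and $Y_\lambda$ degenerates to a sub-Grassmannian $\Gr(2,6)$; imposing $\lambda\in\Ker(\contr)$ removes exactly this degenerate direction. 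Conversely, given a general $\PP^{14}\subset\PP^{20}$ cutting out $Z$, the freedom in the splitting $V_7 = V_6\oplus\C e$ (a point of $\PP(V_7)$ together with a complementary hyperplane) suffices to normalise the associated $\lambda$ into $\Ker(\contr)$, after which $Z = Y_\lambda$. Combining these observations gives the projective equivalence of $Z$ and $Y_\lambda$ for general parameters.
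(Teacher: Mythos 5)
Your construction is correct and is, in substance, exactly the mechanism the paper invokes: the theorem is stated as a citation to Manivel, and immediately afterwards the paper recalls Inoue--Ito--Miura's Proposition 4.1, whose map $[p]\mapsto[\bar s(p),p]$ into $\PP(H^0(\mathcal{E})\oplus\W^2V_6)$ is precisely your $T(\eta)=\eta+\lambda(\eta)\wedge e$ under the identification $H^0(\mathcal{Q})\wedge e\cong V_6\wedge e$. Your decomposability computation ($T(\eta)\wedge T(\eta)$ splitting into complementary summands of $\W^4V_7$) is the right way to make the identification $\Gr(2,7)\cap\PP(\im T)\cong Y_\lambda$ explicit, and it is carried out correctly.

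The one step that is genuinely only sketched is the last one: matching $\Gr(2,7)\cap\PP(\im T)$ with a \emph{general} codimension-$6$ section. As you observe, $\lambda\mapsto\im T$ restricted to $\Ker(\contr)$ (dimension $84$) cannot dominate $\Gr(15,\W^2V_7)$ (dimension $90$), and your proposed repair --- moving the splitting $V_7=V_6\oplus\C e$ to renormalise $\lambda$ into $\Ker(\contr)$ --- is a $12$-parameter-versus-$6$-condition count that is plausible but not yet an argument. There is a cleaner route that removes the difficulty entirely: run your construction with an arbitrary $\mu\in\Hom(\W^2V_6,V_6)=\W^2V_6^*\otimes V_6$ rather than with $\lambda\in\Ker(\contr)$. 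The same computation shows $T_\mu(\eta)$ is decomposable iff $\eta=a\wedge b$ and $\mu(a,b)\in\langle a,b\rangle$; since the $V_6^*$-summand of $\W^2V_6^*\otimes V_6$ acts by $a\wedge b\mapsto\xi(a)b-\xi(b)a\in\langle a,b\rangle$, this condition depends only on the image $\bar\mu\in H^0(\mathcal{Q}(1))$, so the section cut by $\im T_\mu$ is still $Y_{\bar\mu}$. But now $\mu\mapsto\im T_\mu$ identifies the full $90$-dimensional $\Hom(\W^2V_6,V_6)$ with the dense open chart of $\Gr(15,\W^2V_7)$ of subspaces complementary to $V_6\wedge e$, so general $\mu$ (equivalently general $\bar\mu$, since the projection to $H^0(\mathcal{Q}(1))$ is a surjective linear map) gives a general linear section, and conversely. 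Your observation that the $V_6^*$-directions merely tilt the hyperplane $V_6\subset V_7$ (via $T=\W^2\phi$ for $\phi(v)=v-\xi(v)e$) is exactly why the two parametrisations agree up to this redundancy; phrased this way, no normalisation argument is needed.
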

One has (see \cite{inoue2016complete} for the Calabi-Yau, and easy to see by hand as in the surface case) that $(W_Z,W_Y)$ and $(S_Z,S_Y)$ shares the same invariants as well.\\
We now start by defining our quotient construction, working both with the $Y$ and $Z$ model. We will focus on the cases of main interest for us, these being the fourfolds $Y,Z$ and the surfaces $S_Y, S_Z$, but of course everything can be adapted to the Calabi-Yau case $W_Y, W_Z$. Often, when computations will be identical, we will go into the details only for one model and just sketch the other. We will start defining in the following two different action of $D_7$, the dihedral group of order 14, on $V_6$ and $V_7$.

\subsection{Two representations of $D_7$}\label{eqn}
\subsubsection{$D_7$ acting on $V_6$}

Consider now the group $D_7$, acting on $\C^6$ via $$ \tau_6=\frac{1}{7}(1,2,3,4,5,6), \ \sigma_6=\begin{pmatrix}
0 & 0 & 0 & 0 & 0 & 1 \\ 
0 & 0 & 0 & 0& 1 &0 \\ 
0 & 0 & 0 & 1 & 0 & 0 \\ 
0 & 0 & 1 & 0 & 0 & 0 \\ 
0 & 1 & 0 & 0 & 0 & 0 \\ 
1 & 0 & 0 & 0 & 0 & 0
\end{pmatrix}, $$ that is $\tau_6 (v_i)=\epsilon^i v_i$ and $\sigma_6(v_i)=v_{7-i}$, $\epsilon$ being a primitive 7th root of 1. It is easy to see that $\tau_6$ and $\sigma_6$ satisfies the relations of the dihedral group, that is $$\tau_6^7=\sigma_6^2=\mathrm{Id}, \ \sigma_6 \tau_6^i=\tau_6^{7-i}\sigma_6.$$
The choice of this representation is motivated by some famous analogous constructions in the theory of surfaces of general type (for example the standard construction of a \emph{Godeaux surface} as a $\Z/5$ quotient of a smooth quintic surface).\\
The action of $\sigma_6$ pass to $\W^2V_6$ via the canonical associated representation $V_6^{\otimes 2}$, with the rule $$ \sigma_6 (v_i \wedge v_j)= v_{7-i} \wedge v_{7-j},$$ and as well to $\Gr(2,6)$, that we can identify as the set of totally decomposable 2-skew tensors in $\PP(\W^2 V_6)$.
With a little abuse of notation we will denote with $\rho_6$ both this representation of $\C^6$ and on $\W^2 \C^6.$ We consider now the subspace $\mathcal{Y}^{\rho_6}$ given by the $D_7$-invariant $Y_{\lambda}$ under the given representation $\rho_6$, that is $$\mathcal{Y}^{\rho_6}:= \lbrace Y_{\lambda} \ | \ \lambda(g \cdot [p]) \in Y_{\lambda}, \ g \in D_7, \ [p] \in Y_{\lambda} \rbrace,$$ where the $D_7$ action is computed according to $\rho_6$ and $\lambda$ is as in \ref{lambda}.\\
\begin{proposition} \label{inv}The family $\mathcal{Y}^{\rho_6}$  of $D_7$ invariant fourfolds of type $Y_\lambda$ has general member \[ \lambda= v_1 \otimes ( c_{2,6}v_2^* \wedge v_6^*+c_{3,5}v_3^* \wedge v_5^*)+v_2 \otimes (c_{3,6}v_3^* \wedge v_6^*+c_{4,5}v_4^* \wedge v_5^*)+v_3 \otimes (c_{1,2}v_1^* \wedge v_2^*+c_{4,6}v_4^* \wedge v_6^*)+\]\[+v_4 \otimes (c_{4,6}v_1^* \wedge v_3^*+c_{1,2}v_5^* \wedge v_6^*)+v_5 \otimes (c_{3,6}v_1^* \wedge v_4^*+c_{4,5}v_2^* \wedge v_3^*)+v_6 \otimes (c_{2,6}v_1^* \wedge v_5^*+c_{3,5}v_2^* \wedge v_4^*),\] where the $c_{i,j}$ are (sufficiently general) nonzero scalars.
\end{proposition}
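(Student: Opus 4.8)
The plan is to turn the geometric condition ``$Y_\lambda$ is $D_7$-invariant'' into a purely representation-theoretic condition on the section $\lambda$, and then to diagonalize the $D_7$-action on the relevant weight space of $H^0(G_6,\mathcal{Q}(1))$.

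First I would record the rigidity of the assignment $\lambda \mapsto Y_\lambda$. Since $Y_\lambda$ is the zero scheme of a (regular, for general $\lambda$) section of the rank-four bundle $\mathcal{Q}(1)$ and $g\cdot Y_\lambda = Y_{g\cdot\lambda}$, the conormal identification $N^*_{Y_\lambda}\cong \mathcal{Q}(1)^*|_{Y_\lambda}$ shows that a general $Y_\lambda$ recovers the line $\C\lambda \subset H^0(G_6,\mathcal{Q}(1))$. Hence $Y_\lambda$ is $D_7$-invariant if and only if $\C\lambda$ is a $D_7$-stable line, i.e. $D_7$ acts on it through a character. As the abelianization of $D_7$ is $\Z/2$, the only characters are the trivial and the sign one, and both send $\tau_6 \mapsto 1$; so invariance forces $\tau_6\cdot\lambda = \lambda$ together with $\sigma_6\cdot\lambda = \pm\lambda$. (This also rules out the many merely $\Z/7$-invariant $Y_\lambda$ coming from $\tau_6$-eigensections of nonzero weight, whose line is not $\sigma_6$-stable.) In the converse direction any such semi-invariant $\lambda$ satisfies $\lambda(g\omega)=\pm\,g\lambda(\omega)$, from which $g\cdot Y_\lambda=Y_\lambda$ follows at once, so these eigensections genuinely produce invariant fourfolds.

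Next I would make the weight bookkeeping explicit on $H^0(G_6,\mathcal{Q}(1)) = \Ker(\lrcorner\colon \W^2 V_6^*\otimes V_6 \to V_6^*)$, where $D_7$ acts through $\rho_6$. On a monomial $v_k\otimes(v_i^*\wedge v_j^*)$ the generator $\tau_6$ acts with weight $k-(i+j)$ modulo $7$, so the $\tau_6$-fixed part is spanned by the monomials with $k\equiv i+j \pmod 7$. Listing these block by block in $k$ yields exactly twelve monomials, the two attached to each $k$ being precisely the two terms in the bracket multiplying $v_k$ in the statement. Moreover for each of them $k\notin\{i,j\}$, so $v_k\lrcorner(v_i^*\wedge v_j^*)=0$ and all twelve already lie in the kernel; hence the $\tau_6$-invariant subspace of $H^0(G_6,\mathcal{Q}(1))$ is exactly this $12$-dimensional space.

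Finally I would diagonalize $\sigma_6$ on it. Since $\sigma_6$ carries the $v_k$-block to the $v_{7-k}$-block (up to the sign produced by reordering $v_i^*\wedge v_j^*$) and fixes no block, it pairs $(1,6),(2,5),(3,4)$ and splits the space as $6\oplus 6$ into its $(\pm1)$-eigenspaces. Taking for each block-$k$ monomial $x$ the combination $x-\sigma_6 x$ gives a basis of the $(-1)$-eigenspace; collecting terms reproduces exactly the displayed $\lambda$, the shared coefficient across the $v_k$- and $v_{7-k}$-brackets (for instance $c_{2,6}$, multiplying $v_2^*\wedge v_6^*$ in the $v_1$-bracket and $v_1^*\wedge v_5^*$ in the $v_6$-bracket) being precisely the sign-eigenvector relation. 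The two eigenspaces are interchanged by $h=\mathrm{diag}(1,1,1,-1,-1,-1)\in \GL(V_6)$, which commutes with $\tau_6$ and conjugates $\sigma_6$ into $-\sigma_6$; as $h$ induces a projective automorphism of $\Gr(2,6)$, the $(+1)$- and $(-1)$-families of fourfolds are projectively equivalent, so it suffices to exhibit the displayed $(-1)$-family as the general member.

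I expect the rigidity step of the first paragraph to be the main obstacle: one must be sure that scheme-invariance of $Y_\lambda$ really pins $\lambda$ to a $D_7$-stable line, so that no invariant fourfold is missed. A secondary, purely bookkeeping difficulty is the sign tracking under $\sigma_6$ (it is exactly these signs that select the sign character rather than the trivial one), and I would also flag that ``sufficiently general nonzero $c_{i,j}$'' must be checked to give a regular section, since Bertini does not apply to the $6$-dimensional invariant subsystem and smoothness of $Y_\lambda$ needs a separate verification.
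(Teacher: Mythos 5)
Your argument is correct and its computational heart is the same as the paper's: the paper likewise expands $\lambda=\sum c_{i,j,k}\,v_i\otimes(v_j^*\wedge v_k^*)$, compares $\tau(\lambda(a,b))$ with $\lambda(\tau(a,b))$ and $\sigma(\lambda(a,b))$ with $\lambda(\sigma(a,b))$, and extracts exactly your two conditions, namely $c_{i,j,k}=0$ unless $i\equiv j+k \pmod 7$ (your count of twelve $\tau_6$-weight-zero monomials, two per block) and a pairing of the $v_i$- and $v_{7-i}$-blocks under $\sigma_6$. Where you go beyond the paper: first, you address \emph{necessity}, i.e.\ that invariance of the scheme $Y_\lambda$ forces $\C\lambda$ to be a $D_7$-stable line, so that only the two characters of $D_7$ (both trivial on $\tau_6$) can occur; the paper simply imposes equivariance without comment, and your conormal-bundle remark — which amounts to $H^0(I_{Y_\lambda}\otimes\mathcal{Q}(1))=\C\lambda$, a Koszul/Bott computation plus simplicity of $\mathcal{Q}$ — is the right way to close that gap, and you correctly flag it as the one step needing real work. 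Second, you track the reordering sign in $v_j^*\wedge v_k^*$ and identify the displayed $\lambda$ as a $(-1)$-eigenvector of $\sigma_6$ (for instance $\sigma_6(v_1\otimes v_2^*\wedge v_6^*)=-v_6\otimes v_1^*\wedge v_5^*$, so equal coefficients $c_{2,6}$ in the two brackets means anti-invariance), whereas the paper's relation $c_{i,j,k}=c_{7-i,7-j,7-k}$ is the trivial character and, read literally with antisymmetric indices, would flip one sign in each pair; your observation that $\mathrm{diag}(1,1,1,-1,-1,-1)$ commutes with $\tau_6$, conjugates $\sigma_6$ to $-\sigma_6$, and hence swaps the two $6$-dimensional eigenspaces shows the two resulting families are projectively equivalent, so the discrepancy is harmless, but your bookkeeping is the more precise. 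Third, you verify that all twelve weight-zero monomials lie in $\Ker(\lrcorner)$, which the paper leaves implicit. Your closing caveat about regularity of the general invariant section is apt but belongs to the separate smoothness lemmas, which the paper handles later by the exterior-differential-systems criterion rather than by Bertini.
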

\begin{proof}
Let us start writing a general element in $\W^2 V_6^* \otimes V_6$: this will be $$\lambda= \sum_{i,j,k} c_{i,j,k} v_i \otimes (v_j^* \wedge v_k^*).$$ If $a= \sum a_s v_s$ and $b=\sum b_sv_s$ are elements of $V$, we have that $$\lambda(a,b)= \sum_i v_i(\sum_{j,k} c_{i,j,k} p_{j,k}),$$ where $p_{j,k}= a_jb_k-b_ja_k$. The action of $D_7$ in terms of the generators can be expressed as $$\tau (\lambda(a,b))= \sum \xi^i v_i (\sum_{j,k} c_{i,j,k} p_{j,k}),$$ $$\sigma (\lambda(a,b))= \sum v_{7-i} (\sum_{j,k} c_{i,j,k} p_{j,k}).$$ On the other hand we have $$\lambda(\tau(a,b))= \sum  v_i (\sum_{j,k} \xi^{j+k} c_{i,j,k} p_{j,k}),$$ $$\lambda(\sigma(a,b))= \sum v_{7-i} (\sum_{j,k} c_{7-i,j,k} p_{7-j,7-k}).$$
This induces relations between $c_{i,j,k}$, namely \begin{enumerate}
\item $c_{i,j,k}=c_{7-i,7-j,7-k}$;
\item $c_{i,j,k}=0$ for $j+k\not \equiv i $ (mod 7)
\end{enumerate}
Expanding these conditions the statement follows.
\end{proof}

In order to get to a surface we need now to consider the zero set of a global section of $\mathcal{Q}(1) \oplus \of_G(1)^{\oplus 2}$: 
therefore we want to realize $S_{42}$ as $V(\lambda, h_1, h_2)$, whereas $h_1, h_2$ are two linear forms in Pl\"ucker coordinates. In order to preserve the surface we need to look for $D_7$ equivariant linear form as well: in particular, we need to work with the set $$ \mathcal{H}^{\rho_6}:= \lbrace (h_1, h_2) \in  (\W^2 V^*)^2 \ | \ (h_1, h_2) \textrm{ preserved by $D_7$ action } \rbrace$$
These will come by three copies of the trivial induced representation: in coordinates we have to check that, if $p= \sum l_{i,j} v_i \wedge v_j$ and $h_i= \sum h^i_{i,j} v_i^* \wedge v_j^*$, then if $p \in V(h_1, h_2)$, then $g \cdot p \in V(h_1, h_2)$ as well. It is easy to see that the action of $\tau$ and $\sigma$ combined implies that the two linear forms must be both of the form $$ h_i= h^i_{1,6} \ v_1^* \wedge v_6^*+h^i_{2,5} \ v_2^* \wedge v_5^*+ h^i_{3,4} \  v_3^* \wedge v_4^*.$$
Indeed we have
\begin{proposition} Any $D_7$ invariant surfaces $S_Z^{\rho_6}$ (with respect to the representation $\rho_6$) will be given by the triple $(\lambda_{\rho_6}, h_1, h_2)$, with $\lambda_{\rho_6}$ as in proposition \ref{inv} above, and $h_1, h_2$ in $\mathcal{H}^{\rho_6}$.
\end{proposition}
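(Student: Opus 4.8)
The plan is to read the statement as the synthesis of Proposition \ref{inv} with the explicit determination of $\mathcal{H}^{\rho_6}$ carried out just above, the only genuinely new point being that the two conditions decouple. First I would fix the equivariant picture. By construction $S_Z^{\rho_6}$ is the zero scheme on $G_6=\Gr(2,6)$ of a section of $\mathcal{E}=\mathcal{Q}(1)\oplus\of_{G_6}(1)^{\oplus 2}$, i.e.\ of a triple $(\lambda,h_1,h_2)$ with $\lambda\in H^0(G_6,\mathcal{Q}(1))$ and $h_1,h_2\in H^0(G_6,\of_{G_6}(1))\cong\W^2 V_6^*$. The representation $\rho_6$ makes $V_6$, hence $\W^2 V_6$ and $G_6$, into $D_7$-spaces, so $D_7$ acts on $H^0(G_6,\mathcal{E})$ preserving the direct sum into the $\mathcal{Q}(1)$-summand and the two $\of_{G_6}(1)$-summands. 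I would therefore seek surfaces cut out by a $D_7$ semi-invariant section: such a section is semi-invariant in each $D_7$-stable summand separately, so the problem splits into making $\lambda$ and the pair $(h_1,h_2)$ semi-invariant, which is exactly what guarantees that $Y_\lambda=V(\lambda)$ and the linear section $V(h_1)\cap V(h_2)$—hence their intersection $S$—are $D_7$-invariant.

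The $\lambda$-part is then immediate: requiring $Y_\lambda$ to be $D_7$-invariant is the defining condition of $\mathcal{Y}^{\rho_6}$, so Proposition \ref{inv} forces $\lambda=\lambda_{\rho_6}$. For the hyperplanes I would run the eigenvector computation. Since $\tau(v_j^*\wedge v_k^*)=\epsilon^{-(j+k)}\,v_j^*\wedge v_k^*$, a $\tau$-eigenform can only involve the index pairs with $j+k\equiv 0\pmod 7$, namely $(1,6),(2,5),(3,4)$; on the span of the three corresponding wedges $\sigma$ acts by $v_j^*\wedge v_k^*\mapsto v_{7-j}^*\wedge v_{7-k}^*=-\,v_j^*\wedge v_k^*$, i.e.\ by the scalar $-1$, so it stabilises every hyperplane $V(h_i)$ automatically. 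Hence each $h_i$ must lie in $\langle v_1^*\wedge v_6^*,\,v_2^*\wedge v_5^*,\,v_3^*\wedge v_4^*\rangle$, which is precisely $\mathcal{H}^{\rho_6}$. Combining the two computations yields the asserted triple $(\lambda_{\rho_6},h_1,h_2)$ with $(h_1,h_2)\in\mathcal{H}^{\rho_6}$.

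The step I expect to require the most care is the decoupling—more precisely its converse direction: passing from invariance of the \emph{surface} $S$ to semi-invariance of the defining data is not formal, because $S$ has codimension two in $Y_\lambda$ and does not determine $\lambda$ (many fourfolds of the family contain a given $S$). The honest formulation, which I would make explicit, is that we are parametrising the invariant surfaces of this construction by $D_7$ semi-invariant sections of $\mathcal{E}$; under this reading the direct-sum decomposition above is forced and the proposition reduces to the two previous computations, exactly as in Proposition \ref{inv}, where invariance was imposed geometrically as $\lambda(g\cdot[p])\in Y_\lambda$ and then translated into linear conditions on the coefficients. I would also note in passing that an invariant $V(h_1)\cap V(h_2)$ could in principle arise from a two-dimensional irreducible $D_7$-subrepresentation of $\W^2 V_6^*$ rather than from two individually stable hyperplanes; the definition of $\mathcal{H}^{\rho_6}$ records the choice of demanding each hyperplane be separately $D_7$-stable, which singles out the family under study.
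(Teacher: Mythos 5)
Your proposal is correct and follows essentially the same route as the paper, which offers no separate proof for this proposition: it is presented as the synthesis of Proposition \ref{inv} (for $\lambda$) with the eigenvector computation identifying $\mathcal{H}^{\rho_6}$ as the span of $v_1^*\wedge v_6^*,\,v_2^*\wedge v_5^*,\,v_3^*\wedge v_4^*$, and your caveats about decoupling and about irreducible two-dimensional subrepresentations are honest points the paper glosses over. One step is misattributed, though: a $\tau$-eigenform may be supported on any fixed class $j+k\equiv c\pmod 7$, not only $c=0$; it is the $\sigma$-action, which exchanges the $c$ and $-c$ eigenspaces, that forces $c=0$ when you demand each hyperplane be separately $D_7$-stable, after which your observation that $\sigma$ acts by $-1$ on the $c=0$ block finishes the argument.
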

\subsubsection{From Gr(2,6) to Gr(2,7) and $D_7$ action}
In order to understand how the action of $D_7$ on $V_7$ works, we make explicit the identification between $Y$ and $Z$.
We use an alternative description given by Inoue-Ito-Miura, (cfr. \cite{inoue2016complete}, Proposition 4.1), that we briefly recall. Suppose $V$ is a linear space of dimension $n$, $\mathcal{E}$ a globally generated vector bundle on $Gr(k,V)$, $s$ an element in $H^0(\mathcal{E}) \otimes (\W^kV)^*$ and $\bar{s}$ its image in $H^0(\mathcal{E}(1))$. We denote by $P_{\bar{s}}$ the linear section of $Gr(k, V \oplus \C) \subset \PP(W \oplus \W^k W)$ given by the image of the map $$ \PP(\W^kV) \hookrightarrow \PP(H^0 (\mathcal{E}) \oplus \W^k V); \ \ [p] \to [\bar{s}(p), p],$$ where $W=V \oplus \C$.\\
One has that $\bar{s}$ is general if and only if $P_{\bar{s}}$ is, and $V(\bar{s})$ and $V(P_{\bar{s}})$ are projectively equivalent. This is exactly our case with $\mathcal{E}=\mathcal{Q}$ and $\bar{s}=\lambda$. Therefore computing the image of the map above one has that $Z=V(P_{\bar{s}}) \subset Gr(2,7)$ is defined by the following 6 equations\begin{equation}\label{maximal}Z=V(x_{1,7}-c_{2,6}x_{2,6}-c_{3,5}x_{3,5}, x_{1,6}-c_{3,6}x_{2,5}-c_{4,5}x_{3,4}, x_{1,5}-c_{4,6}x_{2,4}-c_{1,2}x_{6,7},\end{equation} 
$$ \ \ \ x_{1,4}-c_{1,2}x_{2,3}-c_{4,6}x_{5,7},x_{1,3}-c_{3,6}x_{4,7}-c_{4,5}x_{5,6},x_{1,2}-c_{2,6}x_{3,7}-c_{3,5}x_{4,6}).$$

This suggests indeed how the $D_7$ action on $V_7$ should work. In particular we define $\tau_7=\frac{1}{7}(0,1,2,3,4,5,6)$ and $\sigma_7(v_i)=v_{9-i}$. This passes to $\W^2V_7$ via $$\tau_7(v_i \wedge v_j)= \epsilon^{i+j-2} v_i \wedge v_j$$ and $$\sigma_7(v_i \wedge v_j)=v_{9-j}\wedge v_{9-i}.$$ We denote this representation by $\rho_7$.  With computations totally similar to the case $n=6$, one find  after rescaling the first coefficient of every equation that the maximal invariant family is indeed what we already found above
\begin{lemma} The maximal family $Z_{\rho_7}$ of invariant fourfold with the action above defined is the complete intersection defined by the equation in \ref{maximal}
Notice that in any of the above equations the sum $i+j \equiv k \ (mod \ 7)$ is constant ($k=1, \ldots, 7$,  $k \neq 2$).\\
Similary the maximal family $S_{\rho_7}$ is obtained by adding two copies coming from the trivial representations, that is two (linearly independent) hyperplanes in the coordinates $x_{3,6}, x_{4,5}, x_{2,7}$.
\end{lemma}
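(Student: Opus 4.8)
The plan is to translate the whole statement into representation theory on $\W^2 V_7^*\cong H^0(\Gr(2,7),\of(1))$ and to mimic the computation of Proposition \ref{inv}. A linear section $V(L)\subset\Gr(2,7)$ cut out by a subspace $L\subset\W^2 V_7^*$ is preserved by $D_7$ if and only if $L$ is a $\rho_7$-subrepresentation; so I must describe the invariant $6$- and $8$-dimensional subspaces and single out the generic one.

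First I would diagonalise $\tau_7$. Since $\tau_7(v_i\wedge v_j)=\epsilon^{i+j-2}v_i\wedge v_j$, the Pl\"ucker coordinate $x_{i,j}$ is an eigenvector of weight $2-(i+j)$, so $\W^2 V_7^*$ splits into seven eigenspaces $E_k$ indexed by the residue $k\equiv i+j\pmod 7$, each of dimension three; this is exactly the recorded observation that a $\tau_7$-invariant equation has all its monomials in one class $k$. Then I would compute the $\sigma_7$-action: from $\sigma_7(v_i\wedge v_j)=v_{9-j}\wedge v_{9-i}$ one gets $i+j\mapsto 18-(i+j)\equiv 4-k\pmod 7$, so $\sigma_7$ carries $E_k$ isomorphically onto $E_{4-k}$. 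The six classes appearing in \ref{maximal} thus fall into the three $\sigma_7$-orbits $\{1,3\}$, $\{4,7\}$, $\{5,6\}$, while the class $k=2$, spanned by $x_{2,7},x_{3,6},x_{4,5}$, is fixed; a direct check gives $\sigma_7=+1$ on $E_2$, so $E_2$ is three copies of the trivial representation and contains no sign character.

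To produce the maximal fourfold family I would pick, in each of the three orbits, one $\tau_7$-eigenline and normalise its unique $x_{1,\bullet}$-coefficient to $1$; applying $\sigma_7$ then yields the partner line in the paired class with the same coefficients. Writing the $E_1$-line as $x_{1,7}-c_{2,6}x_{2,6}-c_{3,5}x_{3,5}$ and transporting it by $\sigma_7$ recovers the class-$3$ form $x_{1,2}-c_{2,6}x_{3,7}-c_{3,5}x_{4,6}$, and similarly for the other two orbits, so the six forms are precisely those of \ref{maximal}. Maximality is then a parameter count: such an invariant $L$ is governed by one point of $\PP(E_k)\cong\PP^2$ per orbit, i.e. six unconstrained scalars $c_{i,j}$. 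For $S_{\rho_7}$ I would adjoin two more invariant hyperplanes; since the only one-dimensional invariant subspaces of $\W^2 V_7^*$ lie in the trivial piece $E_2=\langle x_{2,7},x_{3,6},x_{4,5}\rangle$, any two independent linear forms supported there keep the section $D_7$-invariant.

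The step needing the most care is the $\sigma_7$-bookkeeping: because $\sigma_7$ reverses the two indices of each monomial one must track the reordering signs and confirm that the $c_{i,j}$ truly coincide across each $\sigma_7$-pair, and one must fix the convention $\sigma_7(v_1)=v_1$ (the formula $v_{9-i}$ leaving the weight-zero vector in place). The second point to justify is that the distribution ``one copy of each $2$-dimensional irreducible, no trivial summand'' is the right choice among invariant $6$-spaces: it is the one whose general member is a prime Fano fourfold projectively equivalent to $Y_\lambda$ through the Inoue--Ito--Miura correspondence used to derive \ref{maximal}, the other distributions producing either rigid or geometrically distinct sections. With these settled, agreement with Proposition \ref{inv} is immediate.
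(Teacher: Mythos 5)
Your proposal is correct and follows essentially the same route as the paper: the paper's own (very terse) proof just says the computation is ``totally similar'' to the $V_6$ case of Proposition \ref{inv}, where the conditions $c_{i,j,k}=0$ unless the indices sum correctly mod $7$ and $c_{i,j,k}=c_{7-i,7-j,7-k}$ are exactly your $\tau_7$-eigenspace decomposition $E_k$ together with the pairing $\sigma_7\colon E_k\to E_{4-k}$, and your sign/convention checks (in particular $\sigma_7(v_1)=v_1$ and $\sigma_7=+1$ on $E_2$) all come out right. The only point you flag that the paper also leaves implicit is why the isotypic distribution ``one copy of each $2$-dimensional irreducible plus two trivials'' is the relevant one; the paper settles this by having already derived the equations \ref{maximal} from the invariant $\lambda$ via the Inoue--Ito--Miura correspondence, which is the same justification you invoke.
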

We want to rewrite the generic member of the above family of surfaces in a much more neat style. Recall that taking the 4-Pfaffians of a generic skew $7 \times 7$ matrix of linear forms yields the Pl\"ucker equations of the Grassmannian $\Gr(2,7)$. We can write our invariant family in the format\begin{equation*}
M= \left(\begin{smallmatrix}
 & \mu_1 x_{3,7}+\mu_2 x_{4,6} & \mu_3 x_{4,7}+\mu_4x_{5,6} & \mu_5 x_{2,3}+\mu_6x_{5,7} & \mu_6x_{2,4}+\mu_5x_{6,7} & \mu_3x_{2,5}+\mu_4x_{3,4} & \mu_1 x_{2,6}+\mu_2 x_{3,5} \\ 
 &  & x_{2,3} & x_{2,4} & x_{2,5} & x_{2,6} & \epsilon_1 x_{4,5} \\ 
 &  &  & x_{3,4} & x_{3,5} & \epsilon_2 x_{4,5} & x_{3,7} \\ 
 &  &  &  & x_{4,5} & x_{4,6} & x_{4,7} \\ 
&  & &  &  & x_{5,6} & x_{5,7} \\ 
 &  &  &  &  &  & x_{6,7} \\ 
 &  &  &  &  &  & 
\end{smallmatrix}\right)
\end{equation*}
where of $S_Z \subset \PP^{12}$ is 
\begin{equation}\label{format}S_Z=V(\Pf(4,M)).
\end{equation} 
The parameters $\epsilon_1$ and $\epsilon_2$ come from the solution of the system of two equations in the $x_{3,6}, x_{4,5}, x_{2,7}$. Equation for the generic Calabi-Yau and fourfold can be easily accessed plugging back in $x_{3,6},x_{2,7}$.

\subsection{Simultaneous smoothness and fixed locus of the action}
Before taking the quotient, we need to address the question of the smoothness of our specific fourfolds $Y_{\lambda}$ and $Z$. As said before, by Inoue-Ito-Miura it suffices to check this for the $Z$-model (since the smoothness of $Z$ implies the generality of $\lambda$, and therefore the smoothness of $Y_{\lambda}$).\\

\begin{lemma} The general surface $S_Z$ constructed above is smooth.
\end{lemma}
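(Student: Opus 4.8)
The plan is to reduce the smoothness of the surface $S_Z$ to the smoothness of the fourfold $Z$, which (by the Inoue--Ito--Miura equivalence recalled above) one only needs to verify for the $Z$-model and which is controlled by the generality of $\lambda$. Recall that $S_Z = Z \cap H_1 \cap H_2$ is obtained from the (smooth) fourfold $Z$ by intersecting with two further hyperplanes; equivalently, $S_Z = V(\Pf(4,M))$ in the Pfaffian format of \eqref{format}. So the first step is to fix a $D_7$-invariant $\lambda$ with the special coefficients $c_{i,j}$ of Proposition \ref{inv}, verify that the corresponding $Z_{\rho_7}$ (the complete intersection of the six equations in \eqref{maximal} inside $\Gr(2,7)$) is smooth, and then show that a general pair of invariant hyperplanes from $\mathcal{H}^{\rho_6}$ (equivalently, a general choice of the parameters $\epsilon_1,\epsilon_2$) cuts $Z$ transversally.

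Concretely, I would argue in two stages. First I would establish that the invariant fourfold $Z$ is smooth for general invariant parameters: this is a Jacobian-rank computation. One takes the $7\times 7$ skew matrix $M$ and the six linear equations of \eqref{maximal}, computes the Jacobian of the defining equations of $Z \subset \Gr(2,7)$ (the six linear forms together with the Pl\"ucker/$4$-Pfaffian relations defining $\Gr(2,7)$), and checks that it has maximal rank along $Z$. Because the family $\mathcal{Y}^{\rho_6}$ is a genuine (positive-dimensional) family with the $c_{i,j}$ free, a Bertini-type argument over the invariant linear system, or a single explicit witness computed by hand or machine, suffices: it is enough to exhibit one invariant member that is smooth, and then smoothness holds on a dense open (hence for the general member). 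Second, with $Z$ smooth, I would run the analogous transversality check for the two invariant hyperplanes: the surface $S_Z = Z \cap H_1 \cap H_2$ is smooth provided $H_1, H_2$ meet $Z$ transversally, which again reduces to a maximal-rank condition on an augmented Jacobian and follows for general $(h_1,h_2)\in\mathcal{H}^{\rho_6}$ by Bertini applied to the (base-point-free part of the) invariant linear system.

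The natural tool is Bertini's theorem, but the subtlety, and the main obstacle, is that we are not free to move the hyperplanes in the full linear system on $\Gr(2,7)$: we are constrained to the $D_7$-invariant subsystem $\mathcal{H}^{\rho_6}$, which is small (governed by the three coordinates $x_{3,6}, x_{4,5}, x_{2,7}$, parametrized by $\epsilon_1,\epsilon_2$). Bertini guarantees smoothness away from the base locus of the restricted system, so the real work is to control that base locus: one must check that the invariant linear system $\mathcal{H}^{\rho_6}$ has base locus disjoint from $Z$ (or at worst meeting it in points one can separately verify to be smooth on $S_Z$), since a small invariant system can fail to be base-point-free and Bertini then says nothing at the base points. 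I expect this base-locus analysis, rather than the generic Jacobian computation, to be the crux.

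I would therefore organize the proof as follows: (i) reduce to the $Z$-model via Inoue--Ito--Miura; (ii) verify smoothness of the invariant fourfold $Z_{\rho_7}$ by an explicit Jacobian-rank check on the equations \eqref{maximal} together with the $4$-Pfaffians of $M$, for a sufficiently general choice of the $c_{i,j}$; (iii) identify the base locus of the invariant hyperplane system $\mathcal{H}^{\rho_6}$ and check it is disjoint from $Z$ (or that $Z$ meets it smoothly); (iv) apply Bertini on the complement of the base locus to conclude that the general invariant $S_Z = Z\cap H_1\cap H_2$ is smooth. In practice the Jacobian and Pfaffian computations in (ii)--(iii) are most reliably discharged by a short symbolic computation (e.g. in Macaulay2 or Magma) specializing the $c_{i,j}$ and $\epsilon_i$ to general values and confirming the singular locus of the resulting surface is empty; producing one such explicit smooth witness is enough to conclude, by semicontinuity, that the general member of the family is smooth.
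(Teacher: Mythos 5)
Your final fallback --- exhibit one explicit smooth invariant member by a symbolic computation and conclude for the general member by openness of smoothness over the irreducible parameter space of the $c_{i,j}$ and $\epsilon_i$ --- is sound, and it is one of the routes the paper acknowledges in passing (``a computation in local coordinates''). But the Bertini scaffolding you build around it does not work. The $D_7$-invariant hyperplanes form the net spanned by $x_{3,6}, x_{4,5}, x_{2,7}$, so the base locus of this system on the fourfold $Z$ is $Z \cap V(x_{3,6},x_{4,5},x_{2,7})$: the intersection of a $4$-dimensional projective variety with three hyperplanes, hence of dimension at least $1$. This base curve is contained in $S_Z = Z \cap H_1 \cap H_2$, so Bertini leaves smoothness of $S_Z$ unverified along an entire curve, and your step (iii) (``check the base locus is disjoint from $Z$, or meets it in points'') cannot succeed. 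The same objection already applies to step (ii): the six equations of \ref{maximal} are themselves confined to the invariant family, so no Bertini statement is available for $Z$ either. The entire weight of your argument therefore rests on the computational witness, which you do not actually produce.

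The paper's proof is different and computer-free: it uses the ``ordinary plane'' criterion for linear sections of $\Gr(2,V)$ from the theory of exterior differential systems. For the linear system $\Sigma \subset \W^2 V_7^*$ spanned by all eight invariant forms and for $w \in V_7$, one sets $H(w) = \lbrace a \in V_7 \mid \Omega(a,w)=0 \textrm{ for all } \Omega \in \Sigma\rbrace$; any $2$-plane $P \in S_Z$ containing a $w$ with $\dim H(w)$ minimal is automatically a smooth point. Computing $H(w)$ reduces to the rank of the $8 \times 7$ matrix $(\mu_k w_i)_{k,i}$, which is maximal for general $w$ and general coefficients, and every plane of $S_Z$ contains such a $w$. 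This treats all eight hyperplanes symmetrically, so there is no base-locus issue and no need to move anything in a linear system; it is precisely adapted to the situation where the section is constrained to an invariant subfamily. To repair your route, either carry out the explicit witness computation in full, or replace the Bertini step by this regular-vector criterion.
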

\begin{proof}
The smoothness of $Z$ can be checked in several ways, for example by computing the infinitesimal deformation module of the affine cone of the general member or with a computation in local coordinates. We require our coefficients to be sufficiently general, for example all distinct numbers. On the other hand it is easy to to produce singular example with some special choice of coefficients. For example by picking all $\mu_i=1$ one gets a nodal surface. We propose  here an alternative computer-free method coming from the theory of \emph{exterior differential systems} (see \cite{bryant2013exterior}). We use a sufficient criterion for a point in a linear section of a Grassmannian of planes to be smooth.\\
In general, let $V$ a vector space, $\Sigma \subset  \W^2(V^*)$ a linear subspace and $Z_{\Sigma}$ the corresponding subvariety of the Grassmannian. For any $w \in V$, consider the vector space $H(w)$ defined as $$H(w) = \lbrace a \in V \ | \ \Omega(a,w)=0, \ \textrm{for all } \Omega \in \Sigma \rbrace. $$
We say that $w$ is $\Sigma$-regular if the dimension of $H(w)$ is minimal among all $w \in V$ and that a 2-plane $P \in Z_{\Sigma}$ is $\Sigma$-ordinary if $P$ contains a $\Sigma$-regular vector. The relevant result is that any ordinary plane is actually a smooth point of $Z_{\Sigma}$.\\
Let us now apply this method to our case. Let us do first the surface $S_Z$. Fix a $w=\sum w_i v_i$: $H(w)$ is then exactly the space of point $u$ in $\C^7$ that satisfies the system of equations \ref{maximal}, with two more in the coordinates $x_{3,6}, x_{4,5}, x_{2,7}$. This amounts to solve the linear system  $$M \cdot U=0,$$ where $$M=(\mu_k w_i)_{k,i}, \ \ U=(u_1, \ldots, u_7)^T.$$ One checks that for general $\mu_k$ and $w_i$ the matrix has maximal rank (that is, the dimension of $H(w)$ is constantly zero for general choices) and that any plane $P$ in $S_Z$ contains a general $w$. 
\end{proof}
By applying the same method one checks
\begin{lemma} The general fourfolds $Z$ and $Y_{\lambda}$ constructed above are smooth.
\end{lemma}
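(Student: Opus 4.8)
The plan is to reuse verbatim the exterior-differential-systems criterion that was just applied to the surface $S_Z$, since the only change is that we drop the two extra hyperplane conditions and work with the four-dimensional linear section instead of the surface. First I would recall the setup: for $Z$ we take $\Sigma \subset \W^2 V_7^*$ to be the six-dimensional linear system cutting out the complete intersection in \ref{maximal}, and for $Y_\lambda$ the corresponding system on $\W^2 V_6^*$; by the Inoue--Ito--Miura equivalence established above it in fact suffices to treat $Z$, since smoothness of $Z$ forces $\lambda$ to be general and hence $Y_\lambda$ smooth as well. The criterion states that a plane $P \in Z_\Sigma$ is a smooth point as soon as it contains a $\Sigma$-regular vector, i.e. a $w$ for which $H(w) = \lbrace a \in V \mid \Omega(a,w)=0 \text{ for all } \Omega \in \Sigma \rbrace$ has minimal dimension.

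The key computational step is identical in spirit to the surface case. Fixing $w = \sum w_i v_i$, the space $H(w)$ is the kernel of the linear map $V_7 \to \C^6$ whose matrix is $M = (\mu_k w_i)_{k,i}$, built from the six defining $2$-forms of \ref{maximal}. I would argue that for general coefficients $\mu_k$ and a general $w$ this $6 \times 7$ matrix has full rank $6$, so that $\dim H(w) = 1$, which is the minimal possible value (it cannot drop to $0$ since $w$ itself always lies in its own annihilator for alternating forms, giving $w \in H(w)$). Thus a general $w$ is $\Sigma$-regular. It then remains to check that a general plane $P \in Z$ contains such a regular vector, which follows because the locus of non-regular vectors is a proper closed subvariety and the planes of $Z$ sweep out enough of $\PP(V_7)$.

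The main obstacle is verifying the maximal-rank claim for the specific, highly structured matrix $M$ coming from \ref{maximal} rather than for a generic matrix: the entries are not independent, being governed by the congruence $i+j \equiv k \pmod 7$ noted in the lemma above, so one must confirm that this combinatorial pattern does not force a rank drop. I expect this to reduce, after specializing the $\mu_k$ to sufficiently general distinct values, to the non-vanishing of a single $6 \times 6$ minor, which can be checked directly; the explicit nodal degeneration at $\mu_i = 1$ mentioned for the surface confirms that genericity is genuinely needed and that the generic minor is not identically zero. For $Y_\lambda$ one repeats the same argument with $V_6$ in place of $V_7$ and the four-form system of Proposition \ref{inv}, or simply invokes the projective equivalence to conclude.
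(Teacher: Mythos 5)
Your overall strategy matches the paper's: reuse the exterior-differential-systems criterion from the surface case, and reduce $Y_\lambda$ to $Z$ via the Inoue--Ito--Miura identification. But there is a genuine gap in how you close the argument. The criterion certifies smoothness of $Z_\Sigma$ \emph{at a point} $P$ only if that particular plane $P$ contains a $\Sigma$-regular vector; to conclude that $Z$ is smooth you must therefore verify this for \emph{every} $P \in Z$, not for a general one. Your final step --- ``a general plane $P \in Z$ contains such a regular vector, because the locus of non-regular vectors is a proper closed subvariety and the planes of $Z$ sweep out enough of $\PP(V_7)$'' --- only establishes smoothness at the generic point, which is automatic for a reduced irreducible variety and says nothing about the possible singular locus.

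This is not a hypothetical worry: the paper's own proof shows the criterion genuinely fails at exactly three points of $Z$, namely the coordinate points $p_{3,6}$, $p_{4,5}$, $p_{2,7}$ (which lie on $Z$ but not on $S_Z$ --- this is why the surface argument went through unchanged while the fourfold one does not). For any $w$ in those three planes, $H(w)$ has dimension two rather than the expected one, so none of these planes is $\Sigma$-ordinary and the EDS criterion is silent there. The paper then disposes of these three points by a separate local computation on the Grassmannian in the affine chart $p_{1,2}=1$. Your proposal, as written, would leave the smoothness of $Z$ at these three points unaddressed; you need either to locate the exceptional planes where regularity fails and check them by hand in a chart, or to give some other argument covering every point of $Z$.
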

\begin{proof}
The above method works  for every $P \in Z$, except $p_{3,6}, p_{4,5}, p_{2,7}$ (recall that these three points do not belong to $S_Z$). In fact one checks that for any $w$ in these three planes the corresponding $H(w)$ has dimension two, instead of the expected one. A local computation on the Grassmannian (using for example the chart $p_{1,2}=1$) shows that even these three points are smooth points of $Z$.
\end{proof}

\subsubsection{Fixed locus of the action}
\paragraph{The $Z$-model} 
Once estabilished the smoothness of the fourfolds $Y_{\lambda}$ and $Z$ (and the same for the surface $S_Z$) of the maximal $D_7$ invariant families, we have to compute the fixed locus for the elements of the group. The $Y$-model is identical, therefore we will just sketch the computations. We include in bibliography a link, \cite{documentation} to the M2 and MAGMA codes that we have used for computations.\\
In particular we find that all the order 7 elements of the dihedral group have no fixed points on the surface and the Calabi-Yau, so the subgroup $\Z/7 \triangleleft D_7$ acts freely. Each of the 7 conjugate involutions fixes a conic and 10 isolated points (different for each involution). In particular we have the following
\begin{lemma} \label{fixloc}The fixed locus for the action of the group $D_7$ is 
\begin{itemize}
\item on the surface $S_Z$ and on the Calabi-Yau $W_Z$ it is the reducible union  $\bigsqcup_{i=1}^7 C_i$, where each $C_i$ is the union of a plane conic and 10 extra (disjoint) isolated points. Moreover all $C_i$ are conjugates under the normal subgroup $\Z/7$; 
\item on the fourfold $Z$ it has 3 extra fixed points .
\end{itemize}
\end{lemma}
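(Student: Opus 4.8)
The plan is to treat separately the two classes of nontrivial elements of $D_7$: the rotations generating the normal subgroup $\Z/7$, and the seven reflections, reducing each to a computation on $\Gr(2,7)$.

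\emph{The subgroup $\Z/7$.} Each nontrivial power $\tau^k$ ($1\le k\le 6$) acts on $V_7$ with the seven distinct eigenvalues $1,\epsilon^k,\dots,\epsilon^{6k}$, so an invariant $2$-plane is spanned by two eigenvectors; hence every nontrivial element of $\Z/7$ fixes on $\Gr(2,7)$ exactly the $\binom{7}{2}=21$ coordinate points $[v_i\wedge v_j]$. I would then test these against \eqref{maximal}: a coordinate point has a single nonzero Pl\"ucker coordinate, so it lies on $Z$ iff that coordinate is absent from all six forms. The only coordinates missing from \eqref{maximal} are $x_{2,7},x_{3,6},x_{4,5}$, so $Z$ meets the fixed set in exactly $p_{2,7},p_{3,6},p_{4,5}$; these are the three extra fixed points on the fourfold. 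As the forms $h_1,h_2$ carving out $S_Z$ (and the single one carving out $W_Z$) are combinations of precisely these three coordinates and do not vanish at any of the three points, none of them lies on $S_Z$ or $W_Z$, and $\Z/7$ acts freely there.

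\emph{The reflections.} Since $7$ is odd the reflections form one $\Z/7$-conjugacy class---conjugation by $\tau$ carries $\tau^{j}\sigma$ to $\tau^{j+2}\sigma$ and $2$ generates $\Z/7$---so their fixed loci are $\Z/7$-translates of a single one and it is enough to analyse one reflection $\sigma$. Splitting $V_7=V^+\oplus V^-$ into its $(+1)$- and $(-1)$-eigenspaces (dimensions $4$ and $3$), any invariant $2$-plane decomposes compatibly, so the fixed locus of $\sigma$ on $\Gr(2,7)$ is the disjoint union of $\Gr(2,V^+)\cong\Gr(2,4)$, the Segre product $\PP(V^+)\times\PP(V^-)\cong\PP^3\times\PP^2$ and $\Gr(2,V^-)\cong\PP^2$, lying in $\PP(\W^2 V^+)$, $\PP(V^+\otimes V^-)$ and $\PP(\W^2 V^-)$ respectively. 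A short computation shows that $\sigma$-invariance of $Z$ pairs the six forms of \eqref{maximal} (listed in order as $E_1,\dots,E_6$) as $E_1\leftrightarrow E_6,\ E_2\leftrightarrow E_5,\ E_3\leftrightarrow E_4$, while $h_1,h_2$ are anti-invariant forms supported on $x_{2,7},x_{3,6},x_{4,5}$, hence vanishing on the two summands $\W^2 V^{\pm}$ and nonzero on $V^+\otimes V^-$. On each eigenspace the two members of a pair cut the same hyperplane, so the six forms impose only three independent conditions on every stratum.

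Putting this together I would read off the fixed locus on $S_Z$ stratum by stratum: $\Gr(2,V^-)\cong\PP^2$ is cut by three hyperplanes and is therefore empty; $\Gr(2,V^+)\cong\Gr(2,4)$, the quadric fourfold, is cut by the three hyperplanes coming from \eqref{maximal} (the $h_i$ vanishing there) and yields a plane conic; and the Segre fivefold $\PP^3\times\PP^2$, of degree $\binom{5}{3}=10$, is cut by these same three hyperplanes together with $h_1,h_2$, i.e.\ by five hyperplanes, producing a zero-dimensional scheme of degree ten. The conic lies in the $(+1)$-eigenspace and the ten points in the $(-1)$-eigenspace, so they are automatically disjoint; an entirely analogous stratum analysis handles $W_Z$. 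Disjointness of the seven loci $C_i$ is then immediate: a point fixed by two distinct reflections would be fixed by their product, a nontrivial rotation, contradicting the freeness of $\Z/7$; with the conjugacy this gives $\mathrm{Fix}_{D_7}=\bigsqcup_{i=1}^7 C_i$ on $S_Z$ and $W_Z$, plus the three rotation-fixed points on $Z$.

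The step demanding the most care, and where I would invoke the Macaulay2/MAGMA scripts of \cite{documentation}, is the transversality of these last intersections for sufficiently general coefficients: one must verify that the three hyperplanes meet $\Gr(2,4)$ in a smooth reduced conic and that the five hyperplanes meet the Segre variety in ten distinct reduced points, so that the degree bound $\binom{5}{3}=10$ is attained exactly. As an independent check, the resulting configuration can be matched against the topological and holomorphic Lefschetz fixed-point formulas for $\sigma$ acting on the smooth surface $S_Z$.
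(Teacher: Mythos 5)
Your argument is correct and reaches the same conclusions as the paper, but by a genuinely different and more structural route. For the $\Z/7$ part the paper works in the ambient $\PP^{20}$: it identifies the fixed locus there as seven $\PP^2$'s and then rules out the non-coordinate fixed points by substituting into the Pl\"ucker (4-Pfaffian) relations; your observation that $\tau^k$ has seven distinct eigenvalues on $V_7$, so that the only invariant $2$-planes are the $21$ coordinate points, short-circuits that computation entirely, and your identification of $p_{2,7},p_{3,6},p_{4,5}$ as the only coordinate points on $Z$ (and their absence from $S_Z$ and $W_Z$) agrees with the paper. For the reflections the paper writes down the eigenspaces $\PP^{\pm}\subset\PP^{12}$ explicitly and intersects with the $35$ Pfaffians, essentially reading off the answer (ten points plus a smooth conic) from the resulting equations with computer assistance; your stratification of $\mathrm{Fix}(\sigma)\cap\Gr(2,7)$ into $\Gr(2,V^+)$, $\PP(V^+)\times\PP(V^-)$ and $\Gr(2,V^-)$ instead \emph{explains} the numbers: the conic is a quadric fourfold cut by three hyperplanes, the ten points are the degree of the Segre fivefold, and the pairing of the six linear forms of \eqref{maximal} (three $2$-dimensional irreducible $D_7$-summands, each contributing one condition to each $\sigma$-eigenspace) justifies the count of three independent conditions per stratum. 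Both proofs ultimately lean on a generic-transversality check (independence of the restricted forms, reducedness of the ten points, smoothness of the conic) that is delegated to the Macaulay2/MAGMA scripts, so neither is fully computer-free at that step; your version buys conceptual transparency about where $2$ and $10$ come from, while the paper's buys explicit equations for the fixed conic and points. Your disjointness argument (a point fixed by two distinct reflections would be fixed by a nontrivial rotation, contradicting freeness of $\Z/7$) is a welcome addition that the paper leaves implicit.
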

\begin{proof}

 Consider first the cyclic action of $\tau_7$ as $\frac{1}{7}(0,1,2,3,4,5,6)$ when induced on the exterior algebra. In particular it sends $$\tau_7(\sum \lambda_{i,j} v_i \wedge v_j) \to \sum \epsilon^{i+j-2} \lambda_{i,j} v_i \wedge v_j.$$
The fixed locus on $\PP^{20}$ is the union of seven $\PP^2$, each one with coordinates $\lbrace x_{i,j} \rbrace_{i+j \equiv k \ (mod \ 7)}$. A computer check shows that the cyclic fixed locus lies away from $W$ and $S$. We give in the following a computer-free proof.\\
We have two types of fixed points: the coordinate points $p_{i,j}$ and any other of the form \begin{equation}\label{free}
 \lambda_{i,j} v_i \wedge v_j, \textrm{ with }i+j \equiv\textrm{const} \ (mod \ 7).
 \end{equation}
It is easy to check that no coordinate points $p_{i,j}$ belongs to $Z$ except $p_{3,6}, p_{2,7}, p_{4,5}$ (and they do not belong to $S_Z$). We claim now that the locus in \ref{free} does not intersect the Grassmannian Gr(2,7). To see this recall that the Pl\"ucker equations for the Grassmannian Gr(2,7) are obtained by picking the 4-Pfaffians of the 7x7 skew-symmetric matrix $M=(m_{i,j})$ with $m_{i,j}=\pm x_{i,j}$ if $i< j$ (resp. $i>j$) and $m_{i,j}=0$ if $i=j$.
 By looking at the action of $\tau_7$ any point (of non-coordinate type) of the form \ref{free} can have either two or three non-zero coordinates, with the sum of the indeces being constant mod.7. Call these $(i,j),(k,h), (r,s)$. Substituting in the Pl\"ucker in both case we will have either a surviving (say) $x_{i,j}x_{r,s}=0$ or all three possibilities. In both cases, this implies that none of these points belongs to the Grassmannian.\\
What happens now with of $\sigma_7$? Recall the construction in \ref{format}. The fixed locus of the involution on the ambient $\PP^{12}$ is given by the disjoint union of $\PP^+ \sqcup \PP^-$, with $$\PP^+=V(x_{3,7}-x_{2,6}, \ldots , x_{2,4}-x_{5,7})$$ and $$\PP^-=V(x_{3,7}-x_{2,6},\ldots, x_{2,4}-x_{5,7}, x_{4,5}).$$ Intersecting with the 35 Pfaffians this gives us the union of $C_1 \sqcup C_2$ with $C_1$ being 10 points and $C_2$ a smooth plane conic. All the other six (conjugate) involutions yields the same type of fixed locus. The result follows.\end{proof}

\paragraph{The $Y$-model}
Computations here are identical to the $Z$ model, and yields the same results. One has just to verify that $\tau_6$ yields (on the fourfold $Y_{\lambda}$) the points $p_{1,6}$, $p_{2,5}, p_{3,4}$ whereas the fixed locus of the  involutive part comes from the intersection with the zero set of the equations $\lbrace x_{i,j}\pm  x_{7-j,7-i} \rbrace$, and the same for the other conjugate involutions. We just want to remark that even $Y$ admits a concrete description in terms of equations in $\PP^{14}$. Recall from \ref{lambda} the description of $Y_{\lambda}$ as \begin{equation*} Y_{\lambda} =\lbrace <a,b> \in \Gr(2,6) \ | \ \lambda(a,b) \in \ <a,b> \rbrace. 
\end{equation*}
If $a_1, \ldots, a_6$ and $b_1,\ldots, b_6$ denotes the coordinates of $a,b$ with respect to the standard basis fixed, and if we call $p_{i,j}=a_ib_j-b_ia_j$ the Pl\"ucker coordinates in $\W^2V_6$ the condition above translates in matrix form as

$$\mathrm{ rk} \begin{pmatrix}
a_1 & a_2 & a_3 & a_4 & a_5 & a_6 \\ 
b_1 & b_2 & b_3 & b_4 & b_5 & b_6 \\ 
p_{2,6}+p_{3,5} & p_{3,6}+p_{4,5} & p_{1,2}+p_{4,6} & p_{1,3}+p_{5,6} & p_{1,4}+p_{2,3} & p_{1,5}+p_{2,4}\end{pmatrix} =2.$$
Expanding the determinant in Laplace forms one gets quadratic equation in the dual of the Pl\"ucker coordinates, getting in this way $Y_{\lambda}$ as explicit subvariety of $\PP^{14}$.

\subsection{Quotient Calabi-Yau threefold and surface of general type with an involution}
The analysis in the previous paragraph shows how the fixed locus of the dihedral group $D_7$ depends only on the seven conjugate involutions. In particular the normal subgroups $\Z/7 \triangleleft D_7$ yields a free action on each member of the invariant family, both in the Calabi-Yau and in the surface case. We can then take the quotient for such subgroup and produce new families of varieties in dimension (respecively) 2 and 3. Since we can perform the construction in both $Y$ and $Z$ model, we will simply write $W$ and $S$.
\begin{thm}\label{cyquotient} Let $W$ a linear section of the Grassmannian Gr(2,7) constructed as above. Then $W$ admits a free $\Z/7$ action. In particular the quotient $\pi: W \to \tW$ yields a smooth Calabi-Yau threefold.
\end{thm}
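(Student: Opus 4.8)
The plan is to read off the free action directly from the fixed-locus computation of Lemma~\ref{fixloc}, deduce smoothness of the quotient formally, and then establish the Calabi--Yau condition by tracking how $\Z/7$ acts on the holomorphic $3$-form. Throughout I would work with the $Z$-model, the $Y$-model being identical by projective equivalence.

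First I would settle the free action. The subgroup $\Z/7 \triangleleft D_7$ is generated by $\tau_7$, and since $7$ is prime every nontrivial power $\tau_7^\ell$ ($1 \le \ell \le 6$) has the \emph{same} fixed locus in $\PP(\W^2 V_7) = \PP^{20}$ as $\tau_7$ itself: a point $[\sum \lambda_{i,j}\, v_i \wedge v_j]$ is $\tau_7^\ell$-fixed iff $\ell(i+j-2)$ is constant over the nonzero terms, and $\ell$ being invertible mod $7$ this forces $i+j$ constant mod $7$, i.e. the seven planes $\{x_{i,j}: i+j \equiv c\}$. By Lemma~\ref{fixloc} the whole $D_7$-fixed locus on $W$ arises from the seven conjugate involutions, so this cyclic fixed locus is disjoint from $W$. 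Hence no nontrivial element of $\Z/7$ fixes a point of $W$, i.e. the action is free. As $W$ is smooth (by the smoothness analysis of the preceding subsection) and the action is free, $\pi\colon W \to \tW$ is an étale Galois cover of degree $7$, so $\tW$ is automatically a smooth projective threefold.

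The substantive point is that $\tW$ is Calabi--Yau. Because $\pi$ is étale we have $H^i(\tW,\of_{\tW}) = H^i(W,\of_W)^{\Z/7}$; as $h^{1,0}(W)=h^{2,0}(W)=0$ by the Hodge-number computation above, this yields $h^{1,0}(\tW)=h^{2,0}(\tW)=0$, so it only remains to prove $\omega_{\tW} \cong \of_{\tW}$, equivalently that $\Z/7$ acts trivially on $H^{3,0}(W) = \C\cdot\Omega_W$. I would compute $\tau_7^*\Omega_W$ from the adjunction (Poincaré-residue) presentation
\[
\Omega_W = \Res \frac{\eta}{h_1 \cdots h_7},
\]
where $\eta$ is a generator of $\omega_{\Gr}(7)\cong \of_{\Gr}$ and $h_1,\dots,h_7 \in H^0(\Gr,\of_{\Gr}(1)) = \W^2 V_7^*$ are the seven defining sections. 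Three facts drive the conclusion: (i) $\det \tau_7 = \epsilon^{0+1+\cdots+6}=\epsilon^{21}=1$, so $\tau_7 \in \mathrm{SL}(V_7)$ and fixes the $\mathrm{SL}(V_7)$-invariant generator $\eta$ (here $\det V_7$ is the trivial $\mathrm{SL}(V_7)$-representation, so $\omega_{\Gr}(7)\cong\of_{\Gr}$ is equivariantly trivial); (ii) each $h_i$ is a $\tau_7$-eigenvector, supported on a single eigenspace $i+j\equiv k_i \pmod 7$ with character $\epsilon^{2-k_i}$; and (iii) for the \emph{threefold} the seven equations occupy the classes $k=1,\dots,7$ exactly once (the six equations of \eqref{maximal} cover $k\neq 2$, and the extra trivial-representation hyperplane in $x_{3,6},x_{4,5},x_{2,7}$ supplies $k=2$). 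The product of the characters is therefore $\epsilon^{\sum_{k=1}^{7}(2-k)} = \epsilon^{-14}=1$, whence $\tau_7^*\Omega_W = \Omega_W$. The form descends to a nowhere-vanishing $3$-form on $\tW$, giving $\omega_{\tW}\cong\of_{\tW}$ and completing the claim.

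The hard part is step (iii): one must use the precise set of seven sections defining the threefold (rather than the six cutting out $Z$ or the eight cutting out $S_Z$) and verify that the product of their $\tau_7$-characters, together with the invariance of $\eta$, makes the residue form invariant. This reduces to the numerical coincidence $\sum_{k=1}^{7}(2-k)\equiv 0 \pmod 7$. I would also confirm that the adjunction identification $\omega_W \cong \omega_{\Gr}(7)|_W$ is $\tau_7$-equivariant, which holds simply because it is canonical.
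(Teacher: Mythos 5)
Your proof is correct, and the free-action part coincides with the paper's: the paper's entire proof of Theorem \ref{cyquotient} is a one-line appeal to Lemma \ref{fixloc}, which records that the order-$7$ elements of $D_7$ have no fixed points on $W$, exactly as you argue (your observation that all nontrivial powers of $\tau_7$ share the same fixed locus because $7$ is prime is a small but worthwhile explicit addition). Where you genuinely diverge is in verifying that $\tW$ is Calabi--Yau: the paper leaves the triviality of $\omega_{\tW}$ entirely implicit, while you prove it by an equivariant Poincar\'e-residue computation, checking that $\det\tau_7=\epsilon^{21}=1$ and that the seven defining hyperplanes occupy the eigenvalue classes $k=1,\dots,7$ exactly once, so the product of characters is $\epsilon^{\sum_{k=1}^{7}(2-k)}=\epsilon^{-14}=1$. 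This is a valid and instructive argument; note, though, that there is a softer route the paper is presumably relying on: since $\pi$ is \'etale of degree $7$, $\chi(\of_{\tW})=\chi(\of_W)/7=0$, and $h^1(\of_{\tW})=h^2(\of_{\tW})=0$ because the corresponding groups already vanish upstairs, so Serre duality forces $h^0(\omega_{\tW})=1$; as $\omega_{\tW}$ is torsion (its pullback to $W$ is trivial), a nonzero section makes it trivial. Your computation buys an explicit identification of the character by which $\Z/7$ acts on $H^{3,0}(W)$ (and would detect a failure if the seven sections were chosen from the wrong eigenspaces), whereas the numerical argument is shorter and requires no bookkeeping. Both are complete; your write-up fills a gap in exposition that the paper does not address.
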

\begin{proof}
Follows from description in lemma \ref{fixloc}, where an explicit description of the fixed locus of the dihedral group on $W$ is given.
\end{proof}
\begin{corollary} The Calabi-Yau $\tW$ has Euler characteristic $\chi(\tW)=-14$. In particular the Hodge diamond of $\tW$ is

\[ \begin{matrix}
1 && 8 && 8 &&1& \\
&0 &&1&&0&\\
&&0&&0&&\\
&&&1 &&&
\end{matrix}\]
\end{corollary}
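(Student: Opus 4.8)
The plan is to deduce the whole statement from two inputs already in hand: the Hodge numbers of the Calabi--Yau $W$ recorded in the lemma above, and the freeness of the $\Z/7$-action established in Lemma~\ref{fixloc} (equivalently Theorem~\ref{cyquotient}).

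First I would compute the Euler characteristic of $W$. From the listed Hodge numbers $h^{0,0}=h^{1,1}=h^{2,2}=h^{3,3}=h^{3,0}=h^{0,3}=1$ and $h^{2,1}=h^{1,2}=50$ one gets $\chi(W)=2\bigl(h^{1,1}-h^{2,1}\bigr)=2(1-50)=-98$. Since $\Z/7\triangleleft D_7$ acts freely, $\pi\colon W\to\tW$ is an \'etale cover of degree $7$, and the topological Euler characteristic is multiplicative along such covers, so $\chi(\tW)=\chi(W)/7=-14$.

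Next, because $\pi$ is unramified, pullback identifies $H^{q}(\tW,\Omega^p_{\tW})$ with the $\Z/7$-invariant subspace $H^{q}(W,\Omega^p_W)^{\Z/7}$, whence $h^{p,q}(\tW)=\dim H^{p,q}(W)^{\Z/7}$. The edge entries follow at once: $h^{0,0}=h^{3,3}=1$; $h^{1,0}=h^{2,0}=0$ together with their conjugates, since these already vanish on $W$; and $h^{3,0}=h^{0,3}=1$ because $\tW$ is Calabi--Yau by Theorem~\ref{cyquotient}, i.e. the generator of $H^{3,0}(W)$ is invariant. For the K\"ahler class, $h^{1,1}(W)=1$ is spanned by the Pl\"ucker polarization, which is manifestly $\Z/7$-invariant, so $h^{1,1}(\tW)=1$ and dually $h^{2,2}(\tW)=1$. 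Feeding $h^{1,1}(\tW)=1$ into $\chi(\tW)=2\bigl(h^{1,1}(\tW)-h^{2,1}(\tW)\bigr)=-14$ then forces $h^{2,1}(\tW)=h^{1,2}(\tW)=8$, which is exactly the asserted diamond.

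The only number not simply inherited from $W$ is $h^{2,1}(\tW)$, so the step I would verify independently is the direct computation $\dim H^{2,1}(W)^{\Z/7}=8$. Identifying $H^{2,1}(W)\cong H^1(W,T_W)$ with the infinitesimal deformations (all realized by moving the linear section, as the equality $50=\dim\Gr(7,21)-\dim\mathrm{PGL}(V_7)=98-48$ already signals), I would use the decomposition of $\W^2 V_7^*$ into the seven $\tau_7$-eigenspaces $E_0,\dots,E_6$, each of dimension $3$ and grouped by $i+j-2 \bmod 7$. A $\Z/7$-invariant section of our type meets each $E_k$ in a line, so these sections form $\prod_{k=0}^{6}\PP(E_k)\cong(\PP^2)^7$ of dimension $14$; quotienting by the equivariant automorphisms, namely the centralizer of $\tau_7$ in $\mathrm{PGL}(V_7)$, which is the $6$-dimensional maximal torus, leaves $14-6=8$ moduli. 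The genuine obstacle here is bookkeeping rather than theory: one must confirm that the invariant deformations are exhausted by moving the lines $\Sigma\cap E_k$, and that the torus acts with generically $6$-dimensional orbits on $(\PP^2)^7$ (equivalently, that the relevant weight differences span the rank-$6$ character lattice), so that the count is genuinely $8$ and confirms the value already forced by the Euler characteristic.
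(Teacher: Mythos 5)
Your argument is correct and is exactly the (implicit) one the paper relies on: the corollary is stated without proof, and the intended derivation is multiplicativity of the topological Euler characteristic along the degree-$7$ \'etale cover ($\chi(W)=2(1-50)=-98$, so $\chi(\tW)=-14$) together with the identification $h^{p,q}(\tW)=\dim H^{p,q}(W)^{\Z/7}$, which pins down all entries except $h^{2,1}$ and then forces $h^{2,1}(\tW)=8$. Your independent cross-check $\dim(\PP^2)^7-\dim T=14-6=8$ is consistent with, and essentially the same count as, the paper's later observation that the construction depends on $8$ parameters matching $h^1(T_{\tS})$.
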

\begin{corollary} \label{surface} Let $\tS$ the surface of general type obtained by intersecting $\tW$ with a $\Z/7$-invariant hyperplane section. Then $p_g(\tS)=1, q(\tS)=0$, $K^2_{\tS}=6$, $\pi_1= \Z/7$. In particular its Hodge diamond is
\[ \begin{matrix}
&1 &&14&&1&\\
&&0&&0&&\\
&&&1 &&&
\end{matrix}\]
\end{corollary}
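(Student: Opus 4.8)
The plan is to realise $\tS$ as the base of a free degree-$7$ étale cover $\pi\colon S\to\tS$ and then obtain every invariant from the corresponding invariant of $S$ by multiplicativity. Write $S$ for the smooth $\Z/7$-invariant hyperplane section of $W$ whose quotient is $\tS$, so that $\pi^{-1}(\tS)=S$; by the smoothness established above and by Lemma~\ref{fixloc}, the subgroup $\Z/7\triangleleft D_7$ acts freely on $S$, hence $\pi$ is étale of degree $7$. From the lemma computing the Hodge numbers of $S_Z$ we have $p_g(S)=h^{2,0}=13$, $q(S)=h^{1,0}=0$ and $K_S^2=42$, so $\chi(\of_S)=1-q+p_g=14$ and $e_{\mathrm{top}}(S)=12\chi(\of_S)-K_S^2=126$.

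First I would compute the numerical invariants. Since $\pi$ is étale of degree $7$ we have $\pi^*\omega_{\tS}=\omega_S$, so $K_S^2=7K_{\tS}^2$ and $K_{\tS}^2=6$; the topological Euler number is multiplicative under coverings, giving $e_{\mathrm{top}}(\tS)=126/7=18$. Noether's formula then yields $\chi(\of_{\tS})=(K_{\tS}^2+e_{\mathrm{top}}(\tS))/12=(6+18)/12=2$. Because $\pi$ is étale one has $H^i(\tS,\of_{\tS})\cong H^i(S,\of_S)^{\Z/7}$; as $q(S)=0$ the invariant subspace $H^1(S,\of_S)^{\Z/7}$ is zero, whence $q(\tS)=0$, and therefore $p_g(\tS)=\chi(\of_{\tS})-1+q(\tS)=1$. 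Finally $\omega_{\tS}$ is ample, since $\pi^*\omega_{\tS}=\omega_S=\of_S(1)$ is ample and $\pi$ is finite, so $\tS$ is indeed of general type.

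Next I would pin down the fundamental group. Along the tower $S\subset W\subset Z\subset G_7=\Gr(2,7)$ each term is a smooth ample divisor (a hyperplane section in the Plücker embedding) inside the next, so the Lefschetz hyperplane theorem applies. Starting from the simply connected $\Gr(2,7)$ and descending through the six hyperplane sections cutting out $Z$ (ambient dimension $\ge 4$ at every step), then through $W\subset Z$ and $S\subset W$, each map on $\pi_1$ is an isomorphism, so $\pi_1(S)\cong\pi_1(\Gr(2,7))=1$. Thus $S$ is the universal cover of $\tS$ with deck group $\Z/7$, and $\pi_1(\tS)=\Z/7$. To finish the Hodge diamond I would record $h^{1,0}(\tS)=h^{0,1}(\tS)=q(\tS)=0$ and $h^{2,0}(\tS)=h^{0,2}(\tS)=p_g(\tS)=1$, and extract $h^{1,1}(\tS)$ from $e_{\mathrm{top}}(\tS)=2+2p_g(\tS)-4q(\tS)+h^{1,1}(\tS)=4+h^{1,1}(\tS)=18$, giving $h^{1,1}(\tS)=14$, exactly the displayed values.

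The only genuinely non-formal ingredient is the simple connectivity of $S$ needed for the statement $\pi_1(\tS)=\Z/7$; everything numerical reduces to the multiplicativity of $K^2$ and $e_{\mathrm{top}}$ under the free quotient together with Noether's formula and the identification of $\tS$-cohomology with $\Z/7$-invariant cohomology of $S$. The step most prone to error is the final application of Lefschetz at $S\subset W$, where $S$ is a divisor in the threefold $W$: here one must invoke the isomorphism (rather than mere surjectivity) on $\pi_1$, which is valid precisely because $\dim W=3\ge 3$. I would also note that the indirect computation of $p_g(\tS)$ via $q$ and $\chi$ is what makes the argument clean: computing $p_g(\tS)=\dim H^0(S,\omega_S)^{\Z/7}$ directly would require decomposing the $13$-dimensional space $H^{2,0}(S)$ into $\Z/7$-characters, which is exactly the calculation this route avoids.
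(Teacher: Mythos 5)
Your proposal is correct and is essentially the argument the paper intends: the corollary is deduced from the freeness of the $\Z/7$-action (Lemma~\ref{fixloc}) together with multiplicativity of $K^2$ and $e_{\mathrm{top}}$ under the \'etale degree-$7$ quotient, Noether's formula, and the Lefschetz hyperplane theorem giving $\pi_1(S)=1$ so that $S$ is the universal cover. You simply write out the standard computation that the paper leaves implicit, and all the numbers ($K^2=6$, $e_{\mathrm{top}}=18$, $\chi=2$, $q=0$, $p_g=1$, $h^{1,1}=14$, $\pi_1=\Z/7$) check out.
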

The minimality of the above surface $\tS$ follows from the fact that the generic member of the family of $S$ has Picard rank $\rho=1$.\\
As one can see from (\ref{format}) the surface construction depends by 8 parameters. This is indeed the expected number of moduli $M$. In fact we have
 $$h^1(\tS, T_{\tS}) \geq M \geq h^1(\tS, T_{\tS})-h^2(\tS, T_{\tS})=10\chi(\of_{\tS})-2K_{\tS}^2=8.$$
 We conjecture that our 8-parameter family is indeed an irreducible component of the moduli space of surfaces. We
 notice moreover that the whole family is unobstructed. Since the covering map $\pi: S \to \tS$ is finite, we have $H^i(T_S)\cong H^i(\pi_* T_S)$. To show that this family of $\tS$ is unobstructed it suffices to observe the following lemma.
\begin{lemma} Let $S_Z$ be a codimension 8 (linear) complete intersection in the Grassmannian Gr(2,7). Then $H^2(S_Z, T_{S_Z})=0$.
\end{lemma}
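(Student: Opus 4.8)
I propose to prove the vanishing by passing to the ambient Grassmannian $G=\Gr(2,7)$, resolving everything by the Koszul complex of the $8$ defining hyperplanes, and using Serre duality on the surface to turn the obstruction space into a space of global sections. Write $N=N_{S_Z/G}$ for the normal bundle. Since $S_Z$ is the zero locus of a general section of $\of_G(1)^{\oplus 8}$ we have $N\cong \of_{S_Z}(1)^{\oplus 8}$ and, by adjunction, $\omega_{S_Z}\cong\of_{S_Z}(1)$. Serre duality on the smooth surface then gives
\[
H^2(S_Z,T_{S_Z})\cong H^0(S_Z,\Omega^1_{S_Z}\otimes\omega_{S_Z})^{*}=H^0(S_Z,\Omega^1_{S_Z}(1))^{*},
\]
so it suffices to prove $H^0(S_Z,\Omega^1_{S_Z}(1))=0$.

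First I would write the conormal sequence $0\to N^{*}\to\Omega^1_G|_{S_Z}\to\Omega^1_{S_Z}\to 0$, twist it by $\of_{S_Z}(1)$ (so that $N^{*}(1)\cong\of_{S_Z}^{\oplus 8}$), and take the long exact sequence in cohomology
\[
0\to H^0(\of_{S_Z})^{\oplus 8}\xrightarrow{\ \alpha\ }H^0(\Omega^1_G(1)|_{S_Z})\to H^0(\Omega^1_{S_Z}(1))\to H^1(\of_{S_Z})^{\oplus 8}\to\cdots.
\]
Because $q(S_Z)=h^1(S_Z,\of_{S_Z})=0$ by the first Lemma, the last term vanishes and $H^0(\Omega^1_{S_Z}(1))\cong\operatorname{coker}\alpha$. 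As $S_Z$ is a smooth complete intersection the sequence is exact on the left, so $\alpha$ is injective; it is therefore enough to show that source and target have the same dimension, namely $8$.

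The source has dimension $8$ trivially. For the target I would resolve $\of_{S_Z}$ by the Koszul complex $\wedge^{\bullet}(\of_G(-1)^{\oplus 8})$, tensor by $\Omega^1_G(1)$, and run the hypercohomology spectral sequence with $E_1^{-p,q}=H^q(G,\Omega^1_G(1-p))^{\oplus\binom{8}{p}}\Rightarrow H^{q-p}(S_Z,\Omega^1_G(1)|_{S_Z})$, computing the $E_1$ terms by Borel–Bott–Weil on $\Gr(2,7)$. The key point is that among all the twists $\Omega^1_G(1-p)$ the only nonzero cohomology landing on the total degree $0$ antidiagonal $q-p=0$ is $H^1(G,\Omega^1_G)=\C$ (reflecting $h^{1,1}(G)=1$), which occurs at $p=q=1$ with Koszul multiplicity $\binom{8}{1}=8$; every other twist is either Bott-acyclic or has its unique nonzero cohomology in a degree off this antidiagonal, and the adjacent antidiagonals are empty, so the relevant differentials vanish. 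Hence $\dim H^0(\Omega^1_G(1)|_{S_Z})=8$, the injection $\alpha$ is an isomorphism, $\operatorname{coker}\alpha=0$, and $H^2(S_Z,T_{S_Z})=0$.

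The hard part, beyond the Bott bookkeeping, is a conceptual one: the vanishing is \emph{not} a vanishing at the level of the ambient bundle. Indeed the same spectral sequence (or its Serre dual) shows $H^2(S_Z,T_G|_{S_Z})\cong\C^{8}\neq 0$, the nonzero contribution coming from $H^9(G,T_G(-7))\cong H^1(G,\Omega^1_G)^{*}=\C$ with multiplicity $8$; equivalently $H^0(\Omega^1_G(1)|_{S_Z})\neq 0$. So one cannot argue by a naive ``$H^2(T_G|_{S_Z})=0$''. The vanishing of $H^2(T_{S_Z})$ is instead forced by the numerical coincidence that the conormal contribution $H^0(N^{*}(1))$ accounts for \emph{all} of $H^0(\Omega^1_G(1)|_{S_Z})$ — both dimensions equalling the codimension $8$ — and the crux of the proof is to establish this dimension match together with the (automatic) injectivity of $\alpha$.
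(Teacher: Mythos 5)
Your proof is correct, and it takes a genuinely different --- and in fact more careful --- route than the paper. The paper works with the normal bundle sequence $0 \to T_{S_Z} \to T_{\Gr}|_{S_Z} \to \of_{S_Z}(1)^{\oplus 8} \to 0$ and tries to conclude by showing $H^2(S_Z, T_{\Gr}|_{S_Z})=0$, which it attempts to establish by splitting the Koszul resolution of $T_{\Gr}|_{S_Z}$ into short exact sequences but checking only the first two required vanishings, $H^2(T_{\Gr})$ and $H^3(T_{\Gr}(-1))$. Your dual computation shows that this intermediate claim is actually false: the splitting argument needs $H^{2+p}(T_{\Gr}(-p))=0$ for all $p=0,\dots,8$, and the term $p=7$ survives, since $T_{\Gr}(-7)\cong \Omega^9_{\Gr}(7)\otimes\of(-7)=\Omega^9_{\Gr}$ and $H^9(\Gr,\Omega^9_{\Gr})=\C$, contributing $\binom{8}{7}=8$ dimensions in total degree $2$; equivalently, as you observe, $H^2(S_Z,T_{\Gr}|_{S_Z})\cong H^0(S_Z,\Omega^1_{\Gr}(1)|_{S_Z})^{*}\cong\C^8\neq 0$. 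So the paper's argument has a genuine gap (its key intermediate vanishing fails), while your argument --- Serre duality, the conormal sequence twisted by $\of(1)$ with $N^{*}(1)\cong\of_{S_Z}^{\oplus 8}$, and the Bott bookkeeping showing that the only contribution to $H^0(\Omega^1_{\Gr}(1)|_{S_Z})$ is the $8$-dimensional piece coming from $H^1(\Gr,\Omega^1_{\Gr})^{\oplus 8}$, which is exactly absorbed by the injective map from $H^0(N^{*}(1))=\C^8$ --- closes it. The two small points you should make explicit are already essentially in your write-up: injectivity of $\alpha$ on global sections follows from the subbundle inclusion $N^{*}(1)\hookrightarrow\Omega^1_{\Gr}(1)|_{S_Z}$ at the sheaf level, and $q(S_Z)=h^1(\of_{S_Z})=0$ kills the next term of the long exact sequence. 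In short: the lemma is true, your proof establishes it, and it should replace, not merely supplement, the proof in the paper.
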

\begin{proof}
To $S_Z$ is associated the standard tangent sequence $$0 \to T_{S_Z} \to T_{\Gr}|_{S_Z} \to (\of_{S_Z}(1))^{\oplus 8} \to 0.$$ Passing in cohomology we get $$ \ldots \to 0 \to (H^1(\of_{S_Z}(1))^{\oplus 8} \to H^2(S_Z, T_{S_Z}) \to H^2(S_Z, T_{\Gr}|_{S_Z}) \to \ldots$$
Since $(H^1(\of_{S_Z}(1))^{\oplus 8}=0$, the claim will be proved if $H^2(S_Z, T_{\Gr}|_{S_Z})=0$. To prove this first notice that $T_{\Gr} \cong \Omega^9(7)_{\Gr}$. We then use the Koszul complex for a complete intersection in a Grassmannian after tensoring with $T_{\Gr}$. In particular we have $$  \cdots \to (T_G(-1))^8 \to T_{\Gr} \to T_{\Gr}|_{S_Z} \to 0.$$ Splitting in short exact sequences, we have that we will have vanishing of $H^2(S_Z, T_{\Gr}|_{S_Z})$ if both $H^2(T_{\Gr})$ and $H^3((T_{\Gr}(-1))$ does the same. But these are isomorphics to (resp.) $H^2(\Gr, \Omega^9(7))$ and $H^3(\Gr,\Omega^9(6))$, and these vanishing are automatic for the Grassmannian Gr(2,7) (see \cite{peternell}, lemma 0.1).
\end{proof}
The surface $\tS$ that we have constructed is not a Todorov surface, neither one constructed by Park-Park-Shin in \cite{park}. Indeed the latter are simply connected, while for the former one can check that $\tS$ is not contained in one of the 11 non empty irreducible families listed in \cite[pg 335]{morrison}.\\
The surface $\tS$ comes with a involution $\sigma: \tS \to \tS$. The fixed locus of the involution $\sigma$ consists in one smooth plane conic $C$ and 10 isolated points. We can take the quotient $\sigma: \tS \to \tS/\sigma=: \Sigma$. By adjunction formula $K_{\tS}= \sigma^*(K_{\Sigma})+C$: therefore
\begin{equation}\label{proj}K^2_{\Sigma}= \frac{K^2_{\tS}+C^2-2K_{\tS}C}{2}.\end{equation}
Moreover the adjunction formula for curves on a surface says $K_{\tS}C+C^2+2\chi(\of_C)=0$. \\
Similar formulae relates $\Sigma$ and $S$, the surface of degree 42, where the group acting is the full $D_{14}$ and the fixed locus is given by one conic and 10 isolated points for each of the conjugate involutions. From these and and a computation using MAGMA one gets $C^2=-4$. Therefore by adjunction $K_{\tS}C=2$ and by (\ref{proj}) $K^2_{\Sigma}=-1$. In particular, the Kodaira dimension $k(\Sigma) \leq 0$. The surface $\Sigma$ is of course non minimal, and it has $10\times A_1$ singularities. Using \cite[Lemma 3]{rolleau} we compute $e_{\mathrm{top}}(\Sigma)=\frac{18+2+10}{2}=15$. Denoting with $\widehat{\Sigma}$ the minimal resolution for $\Sigma$, Noether formula and regularity for $\tS$ imply that $e_{\mathrm{top}}(\widehat{\Sigma})=25$ and $p_g(\widehat{\Sigma})=1, q(\widehat{\Sigma})=0$. Therefore $\Sigma$ is a K3 surface with 10 nodes, blown up a single time.
\\
Our construction could be linked to an example of a surface of general type with $p_g=q=0, K^2=3$, and a fundamental group of order 14.  For this one would need a fix-point-free involution on our surface. As we have seen, the involution $\sigma$ has indeed a fixed locus, making impossible to extend this construction any further.

 \section{Appendix A: Further group invariances on the $Z$-model}

 \subsection{Frobenius group of order 21}
The dihedral group $D_7$ is not the biggest group under which the family of surfaces is invariant. To see this, let us rewrite $S \subset \PP^{12}$ in a way inspired by Reid's construction of the $\Z/7$ Campedelli surface (full details later on). Namely, pick coordinates $x_1, \ldots, x_6, y_1, \ldots, y_6, z$ and define $S=V(\Pf(4,M))$ with \[M=\begin{pmatrix}
 0 & x_1+y_1 & x_3+y_3 &x_2+y_2 & x_6+y_6 & x_4+y_4 & x_5+y_5 \\ 
   & 0 &x_4 & \lambda_3y_3 & z & -\lambda_5y_5 & -x_6 \\ 
   &   & 0 &x_5 & \lambda_2y_2 &z& -\lambda_1y_1 \\ 
   &   &   & 0 & x_1 & \lambda_6y_6 &z\\ 
   & -\textrm{sym} &   &   & 0 &x_3 & \lambda_4 y_4 \\ 
   &   &   &   &   & 0 & x_2\\ 
   &   &   &   &   &   & 0
 \end{pmatrix} 
 \]
 Denote by $a$ the cyclic generator sending $x_i \mapsto \varepsilon^i x_i$, $y_i \mapsto \varepsilon^i y_i$, $z \mapsto z$ and $b$ the generator sending $x_i \mapsto x_{2i}$, $y_i \mapsto y_{2i}$, $z \mapsto z$. This corresponds to the cycle $(2, 4,6)(3,5,7)$. Denote by $F_{21}$ the group (of order 21) generated by $a,b$. One checks that $ab=b^2a$. Therefore by the classification of small groups, $F_{21}$ is isomorphic to the Frobenius group of order 21,  which can be represented as the subgroup of $S_7$ generated by $(2,3,5)(4,7,6)$ and $(1,2,3,4 ,5, 6, 7)$, and is the Galois group of $x^7 - 14x^5 + 56x^3 -56x + 22$ over the rationals. The fixed locus is given by imposing $x_1=\rho^i x_2=\rho^{2i}x_4$ (and so on for the other coordinates), where $\rho$ is a third root of unity. It consists of 3 points.\\
We point out that the family is invariant under the group $G_{42}$ of order 42 generated by $a$ and $b'$, with $b': x_i \mapsto x_{3i}$. This construction can be adapted in a straighforward way from the one already given in \cite{miles}.

\subsection{Another $D_7$ action}
The dihedral action we defined is not the only one that can be constructed on the Grassmannian. Indeed we may specify a point in the Grassmannian $\Gr(k,n)$ as a $k \times n$ matrix. The symmetric group $S_{n}$ then acts permutating the columns. Thus the dihedral subgroup $D_{n}$ of $S_n$
generated by the $n$-cycle $\alpha: (1,2,\ldots, n)$ and the longest element in the group $w_0$. The latter, in the case of the symmetric group, corresponds to the permutation $i \mapsto n+1-i$.\\ 
In this case the involutions corresponds to our original one, while the order seven element comes from the discussion in the above subsection. In more concrete terms, define $S \subset \PP^{12}$ be the zero set of the linear equation $$ H= \sum_{i=1}^7 \lambda x_i + \sum_{i=1}^7 \mu y_i$$ and the 4-Pfaffians of the matrix

\[M=\begin{pmatrix}
 0 & \lambda x_6+\mu y_6 & \lambda x_2 &\mu x_5 & \mu y_1 & \lambda x_4  & \lambda x_7+\mu y_7 \\ 
   & 0 &\lambda x_5+\mu y_5 & \lambda x_1 &\mu y_4 & \mu y_7 & \lambda x_3 \\ 
   &   & 0 &\lambda x_4+\mu y_4 & \lambda x_7 &\mu y_3& \mu y_6 \\ 
   &   &   & 0 & \lambda x_3+ \mu y_3 & \lambda x_6 &\mu y_2\\ 
   & -\textrm{sym} &   &   & 0 &\lambda x_2+\mu y_2 & \lambda x_5 \\ 
   &   &   &   &   & 0 & \lambda x_1+\mu y_1\\ 
   &   &   &   &   &   & 0
 \end{pmatrix} 
 \]
 The action of the 7-cycle $\alpha$ sends $x_1 \mapsto x_2 \mapsto \ldots \mapsto 7 \mapsto 1$ for both $x_i$ and $y_i$, while $w_0$ sends $x_1 \mapsto x_6$, $x_2 \mapsto x_5$ and $x_3 \mapsto x_4$, keeping $x_7$ fixed (and similar for $y_i$).
 The surface defined above is clearly invariant under this new dihedral action: however, if we compute the fixed locus we got the same answer of the old model (that is, a smooth conic and 10 isolated points).

\section{Appendix B: Invariant surface family in the Grassmannian Gr(3,6)}

The Grassmannian $\Gr(3,6)$ shares many numerical similiarities with the Grassmannian $\Gr(2,7)$. First of all notice how the Pl\"ucker spaces have very similar dimensions (19 and 20, respectively).
 Moreover the dimension of the Grassmannian $\Gr(3,6)$ is 9, and defined exactly by 35 Pl\"ucker quadrics. Both Grassmannians have degree equals to 42. Of course $\Gr(3,6)$ is not an hyperplane section of $\Gr(2,7)$, nevertheless a further (and even more relevant) similarity comes from their Hilbert-Poincar\'e Series. One has in fact $$ \HP(\Gr(3,6)= \frac{P(t)}{(1-t)^{19}}; \ \HP(\Gr(2,7)=\frac{P(t)}{(1-t)^{20}},$$ with the same Hilbert numerator $P(t)$.\\
Consider now a eight-codimensional linear section of the Grassmannian $\Gr(2,7)$ and a seven-codimensional linear sections of the Grassmannian $\Gr(3,6)$. The first one is the already considered $S_{42}$, and let us call $T$ the second one. Of course both $S$ and $T$ by Lefschetz theorem are regular surface, of degree 42 and by adjunction their canonical class $\omega \cong \of(1)$. Moreover, since the Hilbert numerators are the same for both Grassmannians, they have the same numerical invariants. The idea is try to replicate the $D_7$ construction on the $\Gr(3,6)$ model. Note that the same construction cannot extend to the Calabi-Yau case in dimension 3. Indeed a 6-codimensional linear section in $\Gr(3,6)$ has Euler characteristic -96, ruling out even the possibility of any fix-point-free action of a group with order divisible by seven. \\
As before, we have to build up a $D_7$ action on $V_6$ and later on extend to the Grassmannian. Let us define this action by sending $x_i \mapsto \varepsilon^i x_i, \ \ x_i \mapsto x_{6-i}.$ This action extends to $\W^3 V_6$ in the obvious way, with $x_{i,j,k} \mapsto \varepsilon^{i+j+k} x_{i,j,k}, \ \ x_{i,j,k} \mapsto - x_{6-i,6-j,6-k}.$
It is easy to see that the Grassmannian Gr(3,6) is preserved under this action. The problem reduces then to find an invariant $\PP^{12}$, as in the previous cases. Observe now that any $\Z/7$ eigenvalue different from zero can be obtained in three distinct way as sum mod 7 of strictly increasing natural numbers between 1 and 6. For example $1\equiv 1+2+5 \equiv  1+3+4 \equiv 4+5+6$ and so on. Zero behaves differently, since we have only $0 \equiv 1+2+4 \equiv 3+5+6$.  We can therefore build up equations for $T$ by picking $ T=V( \ldots, \sum_{i+j+k \equiv c} \alpha_{i,j,k} x_{i,j,k}, \ldots).$ Choosing the $\alpha_{i,j,k}=\alpha_{6-i,6-j,6-k}$ we immediately obtain not only the $\Z/7$ invariance but the full $D_7$ as well. \\By doing computations totally similar to the one in the Gr(2,7) case one shows that the $\Z/7$ part of the action is free. Each conjugate involution fixes an elliptic curve $E$ of degree 6 and 6 distinct points. In particular by adjunction formula $K_T \cdot E=6 \Rightarrow E^2=-6$ and $K^2_{T/\sigma}=\frac{6+E^2-2K_T \cdot E}{2}= -6.$   
We point out that we have not been able to check the smoothness of $T$ for generic coefficients without appealing to a \emph{tour-de-force} in computational algebra. We can state anyway the following proposition.
\begin{proposition} \label{g36}Let $T$ a smooth surface constructed as above. The quotient $T/\Z/7$ is a smooth surface of general type with $p_g=1, q=0, K^2=6$, together with an involution $\sigma$.
\end{proposition}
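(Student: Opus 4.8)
The plan is to exploit that the construction produces $T$ together with a \emph{free} action of the normal subgroup $\Z/7 \triangleleft D_7$, and to read off every invariant of $\widetilde{T} := T/(\Z/7)$ from those of $T$ by multiplicativity of numerical invariants under an étale quotient. First I would record the invariants of $T$ itself: as a seven-codimensional linear section of $\Gr(3,6)$ it has $\omega_T \cong \of_T(1)$ and degree $42$, so $K_T^2 = \of_T(1)^2 = 42$; and, as observed above, $T$ shares the numerical invariants of the degree-$42$ surface $S$, hence it is regular with $p_g(T)=13$ and $\chi(\of_T)=14$. In particular $e_{\mathrm{top}}(T)=12\chi(\of_T)-K_T^2=126$.

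Next, since the discussion preceding the statement shows that the $\Z/7$ part of the $D_7$ action is fixed-point free on the smooth surface $T$, the quotient map $\pi\colon T \to \widetilde{T}$ is an étale Galois cover of degree $7$; in particular $\widetilde{T}$ is smooth. As $K_T = \pi^*K_{\widetilde{T}}$ and topological Euler characteristics are multiplicative for covering spaces, I obtain
\[
\chi(\of_{\widetilde{T}})=\tfrac{1}{7}\chi(\of_T)=2,\qquad K_{\widetilde{T}}^2=\tfrac{1}{7}K_T^2=6,\qquad e_{\mathrm{top}}(\widetilde{T})=\tfrac{1}{7}e_{\mathrm{top}}(T)=18,
\]
consistent with Noether's formula $12\chi(\of_{\widetilde{T}})=K_{\widetilde{T}}^2+e_{\mathrm{top}}(\widetilde{T})$.

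To split $\chi(\of_{\widetilde{T}})=2$ into $p_g$ and $q$, I would use that $\pi^*$ injects $H^0(\widetilde{T},\Omega^1_{\widetilde{T}})$ into $H^0(T,\Omega^1_T)$, so $q(\widetilde{T})\le q(T)=0$, whence $q(\widetilde{T})=0$ and $p_g(\widetilde{T})=\chi(\of_{\widetilde{T}})-1=1$. For general type and minimality, note that $\omega_T\cong\of_T(1)$ is very ample, so $K_T$ is ample; since $\pi^*\omega_{\widetilde{T}}=\omega_T$ is ample and $\pi$ is finite, $\omega_{\widetilde{T}}$ is ample as well, and therefore $\widetilde{T}$ is a minimal surface of general type. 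The involution then comes for free from the group theory: as $\Z/7$ is normal in $D_7$, any reflection $\sigma\in D_7$ normalises $\Z/7$ and so descends to an involution $\sigma\colon\widetilde{T}\to\widetilde{T}$, whose fixed locus is the image of that of a reflection on $T$, namely the elliptic sextic $E$ together with the six isolated points.

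The main obstacle lies entirely upstream of this bookkeeping, in securing the two geometric inputs I have taken as given. One is the smoothness of $T$ for general invariant coefficients, which — as already remarked — appears to require a substantial computer-algebra verification and is therefore assumed in the hypothesis. The other is the freeness of the $\Z/7$ action, i.e.\ verifying that the $\tau$-fixed coordinate subspaces and the involution-fixed loci meet $T$ only in the way described; this is the genuine content, carried out by the computation analogous to the $\Gr(2,7)$ case. Once those two facts are in place, the numerology above is automatic and the proposition follows.
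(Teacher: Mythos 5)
Your proposal is correct and follows exactly the route the paper intends: the paper states Proposition \ref{g36} without a formal proof, relying on the preceding discussion (freeness of the $\Z/7$ action checked as in the $\Gr(2,7)$ case, smoothness of $T$ assumed in the hypothesis, and the invariants of $T$ matching those of $S_Z$ because the Hilbert numerators of the two Grassmannians agree), and your write-up simply makes the étale-quotient bookkeeping explicit. You also correctly identify the two genuine inputs — smoothness of $T$ and freeness of the $\Z/7$ action — as the only non-automatic content, which is precisely where the paper places the burden.
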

One very interesting question would be to relate the surface constructed from $\Gr(3,6)$ with the one constructed from $\Gr(2,7)$. However, we have not been able to do so yet.\\
Another interesting feature of the $\Gr(3,6)$ model is that it seems to possible to construct on it an involution $\sigma$ (not induced from $V_6$) such that generates together with the above $\Z/7$ a cyclic group of order 14, with the quotient $T/\Z/14$ having $p_g=0, \ K^2=3$. This will be part of a forthcoming work of the author together with Borisov.

 \section{Appendix C: link with the Pfaffian-Grassmannian equivalence and the Reid $\Z/7$-Campedelli surface}
 
Our construction is closely related to another famous minimal surface of general type, the $\Z/7$ Campedelli-Reid surface from \cite{miles} . This goes via another well known geometric construction, the \emph{Pfaffian-Calabi Yau correspondence}, considered by many authors in \cite{rodland2000pfaffian}, \cite{borisov2009pfaffian}. \\Before making everything explicit, we recall the two main ingredients of the construction.

\subsection{The Pfaffian-Grassmannian equivalence}
We want to describe now another Calabi-Yau $W^{\vee}$ related to our $W$.  We will follow the description of Borisov-Caldararu in \cite{borisov2009pfaffian}. Let fix $V$ as the vector space of dimension 7. If $W \subset \Gr(2,7) \subset \PP(\W^2 V) \cong \PP^{20}$, take the
 dual projective space 
$\PP^* = \PP(\wedge^2 V^*) $
as the projectivization of the space of two-forms on $V$.  The
Pfaffian locus 
$\Pf \subset \PP^*$
is defined as the projectivization of  forms of rank $\leq 4$ on $V$ (that is, degenerate).  Equations for $\Pf$ can be
obtained by taking the maximal Pfaffians of a
skew-symmetric $7 \times 7$ matrix of linear forms on $V$. Note that this yields cubic equation.
The Pfaffian $\Pf$ is a singular subvariety of $\PP^*$ of dimension $17$, with a point
$\omega\in \Pf$ will be singular precisely when the rank of $\omega$
is two. 

Consider a linear subspace of dimension seven
$H^{\vee}\subset \wedge^2 V^*,$
and by abuse of notation $H^{\vee} $ will denote its image in $\PP^*$ as well.  Let $W^{\vee}$ be the
intersection of $H^{\vee} $ with $\Pf$.
Dually let 
$H = \Ann(H^{\vee}) \subset \wedge^2 V $
be the $14$-dimensional annihilator of $H^{\vee}$; and $W$ be the intersection of
$H$ and $\Gr$.  From the construction is evident that $W^{\vee}$ is the projective dual of the $W$ we started from. $W$ and $W^{\vee}$ are not even birational (indeed they have different degrees and $\rho=1$), but enjoy deep similarities. A famous results in \cite{borisov2009pfaffian} establish as an example their derived equivalence.
%
%
%
%
\subsection{The Campedelli-Reid $\Z/7$ surface}
Recall the construction of the $\Z/7$ Campedelli-Reid surface from \cite{miles}.\\
The aim is constructing a canonically embedded and projectively Cohen-Macaulay surface of general type $V \subset \PP^5$ with $p_g=6, \ K^2=14$. These hypotheses implies that the coordinate ring 
is Gorenstein and of codimension 3. In particular, by the famous structure theorem of Buchsbaum-Eisenbud, the ideal of relation can be written as submaximal Pfaffians of a $7 \times 7$  skew matrix. One shows that if the entries $l_{ij}$
of M are sufficiently general then $V : (\Pf_i = 0)$ has the stated properties.\\
Our purpose now is to construct a free action of the group $\Z/7$ on $V$. The general $V$ will not be $\Z/7$-invariant, but we can still get an invariant subfamily by choosing $M$ carefully.  This can be done by arranging the entries of the $7 \times 7$ matrix in a clever ad-hoc way, see \cite{miles}.
%
Moreover for sufficiently general values of the parameters the surface $V=V(\Pf_0=\ldots=\Pf_6=0)$ is smooth, and therefore one has
\begin{thm}[\cite{miles}] \label{reidcamp} Pick $M$ as above, and $V \subset \PP^5$ the corresponding surface. The quotient $\tV=V/ \Z/7$ is a smooth surface of general type with $p_g=q=0$, $K^2=2$, that is a Campedelli surface.
\end{thm}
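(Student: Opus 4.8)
The plan is to reduce the entire statement to the single geometric fact that the $\Z/7$ action on $V$ is free, after which everything follows from the standard numerology of an \'etale Galois cover. The input is fixed by the construction recalled above: $V \subset \PP^5$ is a smooth, projectively Cohen--Macaulay, canonically embedded surface of general type with $p_g = 6$ and $K^2 = 14$, whose homogeneous ideal is cut out by the submaximal Pfaffians $\Pf_0, \dots, \Pf_6$ of the equivariant skew $7 \times 7$ matrix $M$ (this format being legitimate by the Buchsbaum--Eisenbud structure theorem for codimension $3$ Gorenstein ideals). Since $\chi(\of_V) = 1 - q(V) + p_g(V) = 7$ forces $q(V) = 0$, the surface $V$ is moreover regular. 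Granting freeness, the quotient map $\pi\colon V \to \tV$ is finite \'etale of degree $7$, hence unramified, and the two remaining tasks are the numerical computation on $\tV$ and the verification that $\tV$ is again of general type.

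For the numerology I would argue as follows. Because $\pi$ is unramified, $K_V = \pi^* K_{\tV}$, so $K_V^2 = 7\,K_{\tV}^2$ and therefore $K_{\tV}^2 = 14/7 = 2$. Multiplicativity of $\chi(\of_{-})$ under finite \'etale covers gives $\chi(\of_V) = 7\,\chi(\of_{\tV})$, whence $\chi(\of_{\tV}) = 1$. Writing $\pi_* \of_V$ as the sum of its isotypic components for $\Z/7$ identifies $H^1(\tV, \of_{\tV})$ with $H^1(V, \of_V)^{\Z/7} \subseteq H^1(V, \of_V) = 0$, so $q(\tV) = 0$, and then $p_g(\tV) = \chi(\of_{\tV}) - 1 + q(\tV) = 0$. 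These are precisely the invariants $p_g = q = 0$, $K^2 = 2$ of a numerical Campedelli surface. That $\tV$ is smooth is automatic for a free quotient of a smooth variety by a finite group, and since $K_V = \pi^* K_{\tV}$ is ample the pullback criterion forces $K_{\tV}$ to be ample as well, so $\tV$ is a minimal surface of general type.

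The genuinely delicate point --- the one I would spend the real effort on --- is establishing that the action is \emph{fixed-point free}, since all of the above is conditional on it. The generator of $\Z/7$ acts on the homogeneous coordinates of $\PP^5$ by nontrivial characters, so its fixed locus in the ambient space is a finite set of coordinate-type points; one must check that a sufficiently general equivariant choice of the entries $l_{ij}$ of $M$ keeps all of these points off $V$, i.e. that at each such point at least one Pfaffian $\Pf_i$ is a monomial that does not vanish. This is exactly what the ad-hoc arrangement of $M$ in \cite{miles} is engineered to guarantee: the Pfaffians carry distinct $\Z/7$-weights, and the placement of the parameters forces nonvanishing at the candidate fixed points. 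Freeness being an open condition on the (irreducible) parameter space, it then suffices to exhibit one explicit smooth $V$ on which it holds, which also secures the smoothness of $V$ used throughout; combined with the numerical computation, this identifies $\tV = V/(\Z/7)$ as a Campedelli surface.
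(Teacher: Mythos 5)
The paper offers no proof of this statement at all: it is quoted from Reid \cite{miles}, and the surrounding text merely recalls the construction (Buchsbaum--Eisenbud Pfaffian format, equivariant choice of $M$, smoothness and freeness for general parameters). Your outline is the correct standard argument and matches what Reid actually does: granting freeness, $\pi\colon V\to\tV$ is \'etale of degree $7$, so $K_{\tV}^2=14/7=2$, $\chi(\of_{\tV})=7/7=1$, $q(\tV)=q(V)^{\Z/7}=0$, hence $p_g(\tV)=0$, and ampleness of $K_V=\pi^*K_{\tV}$ gives minimality and general type. Two remarks. First, a small logical slip: $q(V)=0$ should be deduced from the projectively Cohen--Macaulay hypothesis (which kills $H^1(\of_V(k))$ for all $k$), not from ``$\chi(\of_V)=7$'', since that value of $\chi$ is itself computed using $q(V)=0$. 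Second, you correctly isolate the real content --- that for the explicit equivariant matrix $M$ the general $V$ is smooth and avoids the six coordinate points fixed by the order-$7$ generator on $\PP^5$ --- but you only describe the check rather than perform it; that verification (exhibiting the weighted placement of the entries $l_{ij}$ and checking a monomial $x_i^3$ survives in the Pfaffian of weight $3i$ at each fixed point) is precisely what \cite{miles} supplies and what neither your sketch nor this paper reproduces. As a reduction of the theorem to that one deferred verification, your argument is sound.
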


\subsection{From our surface to the Campedelli-Reid}
Consider now the quotient Calabi-Yau $\tW$ constructed in \ref{cyquotient}, and let now $\tW^{\vee}$ the dual variety to $\tW$ as constructed above. Denote by $\Pf(V)$ the Pfaffian variety in $\PP^{20}$.  
\begin{proposition} \label{camp} $\tW^{\vee}$ is the extension to a Calabi-Yau threefold of the Campedelli-Reid $\Z/7$ surface. In particular if $H_7$ is a $\Z/7$-invariant hyperplane section one has $\tW^{\vee} \cap H_7 = \tV$, with $\tV$ as in the section above.
\end{proposition}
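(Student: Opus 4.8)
The plan is to make the Pfaffian model of $\tW^{\vee}$ completely explicit and to match it, after passing to a $\Z/7$-invariant hyperplane, with Reid's matrix from \cite{miles}. Recall that $W \subset \Gr(2,7) \subset \PP(\wedge^2 V)$ is cut out by a $14$-dimensional subspace $H \subset \wedge^2 V$, so that its annihilator $H^{\vee} = \Ann(H) \subset \wedge^2 V^*$ is seven dimensional and $W^{\vee} = H^{\vee} \cap \Pf$ sits inside $\PP(H^{\vee}) \cong \PP^6$. Restricting to $\PP(H^{\vee})$ the generic skew $7 \times 7$ matrix of linear forms whose maximal Pfaffians cut out $\Pf$, we obtain a skew $7 \times 7$ matrix $M^{\vee}$ of linear forms on $\PP^6$ with $W^{\vee} = V(\Pf(6,M^{\vee}))$. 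This is already the Buchsbaum--Eisenbud format of a Gorenstein codimension three ideal, the very format in which Reid's surface is presented.

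First I would transport the group action. Since $W$ is $D_7$-invariant, $H$ is a $\Z/7$-subrepresentation of $\wedge^2 V$, hence $H^{\vee}$ is a $\Z/7$-subrepresentation of $\wedge^2 V^*$; the $\Z/7$ action therefore descends to $W^{\vee}$, and passing to weight coordinates on $H^{\vee}$ makes $M^{\vee}$ into a $\Z/7$-equivariant skew matrix. Freeness of this action on $W^{\vee}$ I would check exactly as in Lemma \ref{fixloc}, by intersecting the seven fixed $\PP^2$'s of the cyclic generator with the Pfaffian locus and verifying the intersection is empty; together with the fact that $W^{\vee}$ is a smooth Calabi--Yau threefold, namely the Pfaffian dual of the Calabi--Yau $W$ (see \cite{borisov2009pfaffian}, \cite{rodland2000pfaffian}), this yields the smooth Calabi--Yau quotient $\tW^{\vee} = W^{\vee}/(\Z/7)$, establishing the first assertion. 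A consistency check is $\chi(\tW^{\vee}) = \chi(W^{\vee})/7 = \chi(W)/7 = \chi(\tW) = -14$, since derived equivalent threefolds share their Hodge numbers.

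Next I would cut by a $\Z/7$-invariant hyperplane $H_7 \subset \PP^6$. Then $W^{\vee} \cap H_7 \subset \PP^5$ is defined by the submaximal Pfaffians of the skew $7\times 7$ matrix $M^{\vee}|_{H_7}$ of linear forms on $\PP^5$, i.e. it is a canonically embedded, projectively Cohen--Macaulay surface of precisely the type considered in \cite{miles}. The core of the argument is then to identify $M^{\vee}|_{H_7}$, up to a linear change of the $\Z/7$-weight coordinates and a rescaling of entries, with Reid's matrix $M$. Concretely I would diagonalise the $\Z/7$ action, label the six coordinates of $\PP^5$ by their characters, and compare the pattern of weights appearing in the entries of $M^{\vee}|_{H_7}$ with the pattern in Reid's matrix; since a Gorenstein codimension three surface with $p_g=6$, $K^2=14$ carrying a free $\Z/7$ action is pinned down by its equivariant Pfaffian skeleton, matching the two weight patterns forces $W^{\vee} \cap H_7 = V$ for a suitable choice of the general parameters.

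The hard part will be this last equivariant matching: one must track precisely how the weights on $\wedge^2 V^*$, the duality $H^{\vee} = \Ann(H)$, and the passage to maximal Pfaffians interact, and show that the resulting skew matrix is $\Z/7$-equivalent to the specific arrangement used by Reid rather than merely to some abstract invariant matrix. Once the identification $W^{\vee} \cap H_7 = V$ is in place, the conclusion is immediate: quotienting by the free $\Z/7$ action and invoking Theorem \ref{reidcamp} gives $\tW^{\vee} \cap H_7 = V/(\Z/7) = \tV$, the Campedelli--Reid surface, so that $\tW^{\vee}$ is precisely the Calabi--Yau threefold extending $\tV$.
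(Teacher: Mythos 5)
Your proposal follows essentially the same route as the paper: identify the $\Z/7$-equivariant seven-dimensional subspace $H^{\vee}=\Ann(H)$ dual to the invariant linear system cutting out $W$, present $W^{\vee}\subset\PP(H^{\vee})\cong\PP^6$ by the submaximal Pfaffians of an equivariant skew $7\times 7$ matrix of linear forms, and match this against Reid's matrix after cutting with the $\Z/7$-invariant hyperplane. The only real difference is that the paper carries out the ``hard part'' you defer: it writes the matrix explicitly in the eigencoordinates $x_{1,2},\ldots,x_{1,6},x_{2,7}$ (one representative per $\Z/7$-eigenspace) and observes that setting $x_{2,7}=0$ reproduces Reid's Pfaffian format directly, rather than arguing abstractly that the equivariant weight pattern pins the surface down.
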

\begin{proof} Recall that the equations of a $\Z/7$-invariant $W$ are the one listed in \ref{maximal}, to which we have to add one further linear equation in the variables $x_{2,7}, x_{3,6}, x_{4,5}$ (corresponding to the 0-eigenspace). In particular such seven equation will form a seven-dimenisonal linear subspace $\PP(\Lambda) \subset \PP(\W^2V^*)$. Equations for the dual variety $W^{\vee}$ can be then obtained by considering $\PP(\Lambda^{\perp})$. Note that this gives us 14-codimensional linear section of the Pfaffian variety, grouped by their eigenvalue with respect of the $\Z/7$ action. For example we will have $$W^{\vee}= V(x_{1,2}- \mu_1 x_{3,7}, x_{3,7}-\mu_2x_{4,6}, \ldots) \subset \Pf(V),$$ and so on according to the same rule. Therefore we can project down to the $\PP^6$ with coordinates $x_{1,2}, \ldots, x_{1,6}, x_{2,7}$, where we chose one representative for any eigenspace. The dual variety obtained $W^{\vee}$ will be smooth if only if $W$ is so by \cite{borisov2009pfaffian}. Anyway, since the codimension is small, we can directly check the smoothness of $W^{\vee}$ by any computer algebra system. One can see directly that the equations for $W^{\vee}$ can be arranged in Pfaffian format inside the matrix

 \[M=\begin{pmatrix}
 0 & x_{1,2} & x_{1,4} & x_{1,3} & x_{1,7} & x_{1,5} & x_{1,6} \\ 
   & 0 & x_{1,5} & \lambda_3x_{1,4} & x_{2,7} & -\lambda_5x_{1,6} & -x_{1,7} \\ 
   &   & 0 & x_{1,7} & \lambda_2x_{1,4} & x_{2,7} & -\lambda_1x_{1,3} \\ 
   &   &   & 0 & x_{1,3} & \lambda_6x_{1,7} & x_{2,7} \\ 
   & -\textrm{sym} &   &   & 0 & x_{1,5} & \lambda_4 x_{1,5} \\ 
   &   &   &   &   & 0 & x_{1,3}\\ 
   &   &   &   &   &   & 0
 \end{pmatrix} 
 \]
 with appropriate parameters. By the same argument of \cite{miles} one has that the Pfaffians are $\Z/7$ invariant, and therefore realize the quotient $\tW^{\vee}$. Moreover notice that by picking one further $x_{2,7}=0$ one gets down exactly to the equations of the $\Z/7$ Campedelli-Reid surface described in \ref{reidcamp}.
\end{proof}
\subsection*{Acknowledgments}
This work is part of the PhD thesis of the author. I wish to thank my supervisor Prof. Miles Reid for introducing me to the subject, Christian B\"ohning, Alessio Corti and Rita Pardini for useful comments and discussions on the topic.
I wish to thank the anonymous referees for their comments and insights on this work, which led to an improvement of the work itself.
The author has been supported by MIUR-project FIRB 2012 "Moduli spaces and their applications" and member of the INDAM-GNSAGA.

\end{document}